\documentclass[10pt]{amsart}
\usepackage[a4paper,margin=2.8cm]{geometry}

\usepackage{setspace}
\usepackage{amssymb}
\usepackage{amsmath}
\usepackage{amsthm}

\usepackage[shortlabels]{enumitem}
\usepackage{tikz}
\usepackage{arydshln}

\newtheorem{theorem}{Theorem}[section]
\newtheorem*{theorem*}{Theorem}
\newtheorem{proposition}[theorem]{Proposition}
\newtheorem{lemma}[theorem]{Lemma}
\newtheorem{corollary}[theorem]{Corollary}
\newtheorem{claim}[]{Claim}
\theoremstyle{definition}
\newtheorem{definition}[theorem]{Definition}

\newtheorem{problem}[theorem]{Problem}

\newtheorem{remark}[theorem]{Remark}

\newtheoremstyle{named}{}{}{\itshape}{}{\bfseries}{.}{.5em}{\thmnote{#3}}
\theoremstyle{named}

\usepackage[pdftex,unicode]{hyperref}   
\hypersetup{unicode}
\hypersetup{breaklinks=true}
\hypersetup{urlcolor=blue}

\newcommand{\keywordss}[1]{%
    \par\addvspace\baselineskip
    \noindent\textbf{Keywords:} #1%
}

\newcommand{\K}{\mathbb K}
\newcommand{\F}{\mathbb F}

\renewcommand{\P}{\mathbb P}

\newcommand{\U}{\underline}

\DeclareMathOperator{\GammaL}{\Gamma L}

\DeclareMathOperator{\AGL}{AGL}
\DeclareMathOperator{\diag}{diag}

\DeclareMathOperator{\QH}{QH}
\DeclareMathOperator{\SQH}{SQH}
\DeclareMathOperator{\PQH}{PQH}
\DeclareMathOperator{\PSQH}{PSQH}

\DeclareMathOperator{\End}{End}
\DeclareMathOperator{\GL}{GL}
\DeclareMathOperator{\Aut}{Aut}

\DeclareMathOperator{\id}{id}

\DeclareMathOperator{\lclm}{lclm}
\DeclareMathOperator{\gcrd}{gcrd}

\DeclareMathOperator{\CCZ}{CCZ}

\DeclareMathOperator{\DDT}{DDT}

\newcommand{\PN}[1]{{\mathbb{P}^{#1}}}

\newcommand{\parskipsize}{0.4em}
\usepackage{enumitem}
\setlist{  
  listparindent=\parindent,
  parsep=\parskip,
}

\begin{document}

\title{Triprojective almost perfect nonlinear permutations and functions}
\author{Faruk G\"{o}lo\u{g}lu}
\address{Charles University}
\curraddr{}
\email{faruk.gologlu@mff.cuni.cz}
\thanks{}
\author{Lukas K\"olsch}
\address{University of South Florida}
\curraddr{}
\email{lukas.koelsch.math@gmail.com}
\thanks{}
\subjclass[2020]{Primary XXXXX; Secondary XXXXX}
\date{}
\dedicatory{}

\begin{abstract}
We give a large family of almost perfect nonlinear (APN)
permutations of finite vector spaces of every odd 
dimension divisible by three. 
We also give APN functions that are not bijective on
even dimensions and related highly nonlinear functions.
The functions we provide admit a so-called triprojective 
structure induced by the general linear group $\GL(3,2^m)$.
\end{abstract}

\maketitle
\keywordss{Almost Perfect Nonlinear Functions, Substitution Permutations Networks, APN permutations}
\setcounter{tocdepth}{1}
\tableofcontents


\section{Introduction}
Let $F : \F_p^n \to \F_p^n$ be a vectorial function with $p$ a prime.
The difference distribution table of $F$ is defined as
\[
	\DDT_F(u,v) := \# \{ x \in \F_p^n : F(x+u) - F(x) = v \}.
\]
The differential uniformity of $F$ is 
\[
	\Delta_F := \max \{ \DDT_F(u,v) : u,v \in \F_p^n, u \ne 0\}.
\]
Functions with optimal differential uniformity $\Delta_F = 1$ are called 
perfect nonlinear (PN) which can happen only when $p$ is odd.
When $p = 2$, functions with optimal differential uniformity 
$\Delta_F = 2$ are called almost perfect nonlinear (APN). 
Examples in this case include the (multiplicative) 
field-inversion function when $n$ is odd. The notion arises from 
cryptography in the context of differential cryptanalysis.
In the cryptographic context of S-Boxes used in the frequently used 
substitution-permutation networks (for which AES is an example), 
permutations with low differential uniformity are employed (for which
the field-inverse function of $\F_{2^8}$ is the example used in 
AES). An important question is the existence question for APN 
bijections when $n > 6$ is even. We refer to the 
book~\cite{Carlet2021_Book} for details and connections.

\subsection{Notions of equivalence of vectorial functions}
Two Boolean functions $F,G : \F_p^n \to \F_p^n$ are said to
be linear-equivalent if they satisfy
\[
	L_1 \circ F \circ L_2 = G,
\]
for some $L_1,L_2 \in \GL(n,p)$, i.e., nonsingular linear 
transformations of the vector space $\F_p^n$. This corresponds
to basis change for the domain and co-domain vector-spaces. 
A coarser notion of equivalence is the extended-affine equivalence
which requires existence of affine bijections $A_1,A_2 \in \AGL(n,p)$
and an $\F_p$-affine function $A_3$ satisfying
\[
	A_1 \circ F \circ A_2 + A_3 = G.
\]
Still a coarser notion is the $\CCZ$-equivalence, which
requires existence of $n \times n$ $\F_p$-matrices 
$A,B,C,D \in M_n(\F_p)$ viewed as linear transformations in $\End(\F_p^n)$
forming a nonsingular matrix  in
$M_{2n}(\F_p)$ viewed as a bijection in $\GL(2n,p)$, and vectors $u,v \in \F_p^n$ that satisfy
\[
		\begin{pmatrix}
			A & B \\
			C & D
		\end{pmatrix}
		\begin{pmatrix}
			x \\
			F(x)
		\end{pmatrix}
		+
		\begin{pmatrix}
			u \\
			v
		\end{pmatrix}
		=
		\begin{pmatrix}
			\pi(x) \\
			G(\pi(x))
		\end{pmatrix}
\]
with $\pi : \F_p^n \to \F_p^n$ bijective. All of these notions 
of equivalence preserve being PN/APN by preserving
differential uniformity. $\CCZ$-equivalence contains both 
notions of equivalence which can be seen by
fixing $B = 0_{n\times n}$ and 
fixing $B = C = 0_{n\times n}, u = v = 0$, respectively. 

\subsection{Quadratic, semi-quadratic forms and multiprojective functions}
We write $F : \F_p^n \to \F_p^n$ as 
$F= (F_1,\ldots,F_n)$ where $F_i : \F_p^n \to \F_p$
for $1 \le i \le n$ and define
\[
	\deg F = \max \{ \deg F_i : 1 \le i \le n \},
\]
where $\deg F_i$ is the degree of the $F_i$ when written (uniquely) 
as a polynomial in 
$$\F_p[x_1,\ldots,x_n]/(x_1^p-x_1,\ldots,x_n^p-x_n).$$
If $F$ is quadratic, each $F_i$ for $1 \le i \le n$ can be written as
\[
	F_i := (x_1,\ldots,x_n) A^{(i)} (x_1,\ldots,x_n)^T +  
	\langle u^{(i)},(x_1,\ldots,x_n) \rangle + \epsilon^{(i)},
\]
for $n\times n$ matrices $A^{(i)} \in M_n(\F_p)$ with coefficients 
in $\F_p$, $u^{(i)} \in \F_p^n$ and $\epsilon^{(i)} \in \F_p$,
and where $(x_1,\ldots,x_n)$ are $\F_p$-variables. 
Without loss of generality, we will take 
$u^{(i)} = \U 0$ and $\epsilon^{(i)}=0$, i.e.,
\[
F_i := (x_1,\ldots,x_n) A^{(i)} (x_1,\ldots,x_n)^T 
\]
since we are interested in PN/APN functions up to 
EA-equivalence. These are called  $\F_p$-quadratic forms
and one can take $A^{(i)}$ to be in upper triangular form
(when $p = 2$ we can take main diagonal to be entirely zeroes since
$x_j^2 A^{(i)}_{jj} \pmod{x_j^2 - x_j}$
absorbs into $\langle u^{(i)},\underline x \rangle$).

When $n = m\ell$, we can view $\F_p^n$ as 
$\F_{p^m}^\ell$ by fixing $\F_{p^m}$-bases over $\F_p$.
We can now consider a collection of $\F_{p^m}$-quadratic forms
(for $1 \le i \le \ell$)
\[
	G_i := (y_1,\ldots,y_\ell) B^{(i)} (y_1,\ldots,y_\ell)^T,
\]
for $\ell \times \ell$ matrices $B^{(i)} \in M_\ell(\F_{p^m})$ 
with coefficients in $\F_{p^m}$ and $(y_1,\ldots,y_\ell)$ are 
$\F_{p^m}$-variables. Now $G = (G_1,\ldots,G_\ell)$ is a map
$G : \F_{p^m}^\ell \to \F_{p^m}^\ell$ which can be viewed
as $\F_p^n \to \F_p^n$ by fixing $\F_{p}$-bases
of $\F_{p^m}$. We say that 
$G \in \QH(\ell,p^m)$ and 
$F \in \QH(n,   p)$.
Note that (fixing bases) we have $\QH(\ell,p^m) \subseteq \QH(n,p)$
and if $m > 1$ (and in turn $\ell < n$) the inclusion is strict.
Here, one introduces semi-quadratic forms
\[
    H_i := (y_1,\ldots,y_\ell) C^{(i)} 
	(y_1^{\sigma_i},\ldots,y_\ell^{\sigma_i})^T,
\]
where $\sigma_i \in \Aut(\F_{p^m})$, i.e., $\sigma_i = p^{e_i}$ 
for some $e_i \in \{0,\ldots,m-1\}$ for all $1 \le i \le \ell$.
The map $H = (H_1,\ldots,H_\ell)$ is called a
$(\sigma_1,\ldots,\sigma_\ell)$-$\F_{p^m}$-multiprojective map, 
and using the $\F_{p^m}$-base embedding again, 
$H : \F_{p^m}^\ell \to \F_{p^m}^\ell$
can be viewed as a quadratic map $\F_p^n \to \F_p^n$.
When $\sigma_i = \sigma = \sigma_j$ for all $1 \le i,j \le \ell$
we say that $H$ is a $\sigma$-semiquadratic map and write 
$H \in \SQH(\ell,p^m)$. We have
\[
\QH(\ell,p^m) \subseteq \SQH(\ell,p^m) \subseteq 
\textrm{$(\sigma_1,\ldots,\sigma_\ell)$-$\F_{p^m}$-multiprojective maps} \subseteq \QH(n,p).
\]

The reason one is interested in semi-quadratic and multiprojective maps
is that they still have some of the structure that $\QH(\ell,p^m)$ has
and they are larger and give more examples of PN/APN functions. In fact,
there are provably exponentially many pairwise inequivalent
PN/APN functions in the class of multiprojective maps.
Note that semiquadratic and multiprojective maps admit automorphisms
(with respect to the foregoing notions of equivalences)
\begin{equation} \label{eq:aut}
   H \circ
	\diag (a,\ldots, a)
	=  \diag(a^{1+\sigma_1}, \ldots, a^{1+\sigma_\ell}) 
	\circ H
\end{equation}

where $a \in \F_{p^m}^\times$ and $\diag(\underline u)$ is the diagonal 
matrix in $\GL(\ell,p^m)$ with the main diagonal $u_i \in \F_{p^m}$ 
and the rest $0$. Moreover, $\GL(\ell,p^m) \le \GL(n,p)$ acts on the right
on multiprojective maps and both on the left and on the right on 
semiquadratic maps, providing the subfield structure we mentioned before.
This structure has been used 
\cite{Gologlu22, Gologlu23, gologlu2025equivalences, golouglu2023exponential, SPKG} 
for solving problems in the theory of highly nonlinear functions.

\subsection{Bijections of $\PN{\ell-1}(\F_q)$}
Let $q = p^m$ and define an equivalence relation among vectors 
$\U u,\U v \in \F_q^\ell \setminus \{\U 0\}$
as $\U u \sim \U v$ if and only if $\U u = \lambda \U v$ 
for some $\lambda \in \F_q^\times$. 
Recall that the projective space $\PN{\ell-1}(\F_q)$ is defined as the set of 
equivalence classes of $\F_q^\ell \setminus \{\U 0\}$ modulo the
equivalence relation $\sim$. Any map $F : \F_q^\ell \to \F_q^\ell$
that satisfies $F(\lambda\U x) = \phi(\lambda)F(\U x)$ for 
$\lambda \in \F_q^\times$ and $\phi : \F_q \to \F_q$, 
and $F(\U x) \ne 0$ for $\U x \in \F_q^\ell \setminus \{\U 0\}$ induces a 
well-defined map 
$\overline{F}$ on the projective space $\PN{\ell-1}(\F_q)$. 

In \cite{comm_paper}, we defined projective versions $\PQH(\ell,q)$ 
and $\PSQH(\ell,q)$ of $\QH$ and $\SQH$ as follows.

\begin{definition} 
The projective homogeneous semiquadratic (resp. quadratic) transformations
$\overline{F} \in \PSQH(\ell,q)$ (resp. $\PQH(\ell,q)$) 
are transformations
$\overline{F} : \PN{\ell-1}(\F_q) \to \PN{\ell-1}(\F_q)$
induced by $F \in \SQH(\ell,q)$ (resp. $\QH(\ell,q)$) 
that satisfy $F(\U x) = \U 0 \iff \U x = \U 0$.
\end{definition}

The main result in \cite{comm_paper} is the following result 
on semi-quadratic bijections of arbitrary characteristic.

\begin{theorem} \label{thm:all_char_old_paper}
Let $q = p^m$ and $f,g,h \colon \F_q^3\rightarrow \F_q$ be defined as
\begin{align*}
f(x,y,z) &=  x^{\sigma+1} + ay^{\sigma}z + bx^{\sigma}y + cx^{\sigma}z,\\
g(x,y,z) &= ay^{\sigma+1} +  z^{\sigma}x + bz^{\sigma}y + cx^{\sigma}y,\\
h(x,y,z) &=  z^{\sigma+1} -  x^{\sigma}y,
\end{align*}
where $a,b,c \in \F_q$ and 
$\sigma=p^k$.

Then $\overline{F} = (\overline {f,g,h})\in \PSQH(3,q)$ is bijective 
if and only if the equation	
\begin{equation}
		ax^{\sigma^2+\sigma+1}+bx^{\sigma+1}+cx+1=0
\label{eq:condition_oldmain}
\end{equation}
has no solution $x \in \F_q$. 
\end{theorem}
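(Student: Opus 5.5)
The plan is to split the statement into two parts: (i) $F$ has no nontrivial zeros, so that $\overline F$ is well defined, and (ii) $\overline F$ is injective. Since $\PN{2}(\F_q)$ is finite, injectivity is equivalent to bijectivity. Throughout I use that $F$ is $\sigma$-semiquadratic, so $F(\lambda\U x)=\lambda^{\sigma+1}F(\U x)$, and only projective classes matter.

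\emph{Step 1 (zeros of $F$).} Solve $h=0$, i.e.\ $z^{\sigma+1}=x^\sigma y$.
\begin{itemize}
\item If $x\neq 0$, scale to $x=1$. Then $y=z^{\sigma+1}$, and substituting gives
\[
f(1,z^{\sigma+1},z)=az^{\sigma^2+\sigma+1}+bz^{\sigma+1}+cz+1 .
\]
A direct check shows $g(1,z^{\sigma+1},z)=z^\sigma f(1,z^{\sigma+1},z)$.
\item Hence the nonzero zeros of $F$ with $x\neq0$ are exactly the points $(1,z^{\sigma+1},z)$ with $z$ a root of \eqref{eq:condition_oldmain}.
\item If $x=0$, then $h=0$ forces $z=0$, and $F(0,y,0)=(0,ay^{\sigma+1},0)$.
\end{itemize}
This shows that a root of \eqref{eq:condition_oldmain} produces a nontrivial zero of $F$, so $\overline F$ is not even defined as a bijection; this gives the ``only if'' direction. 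For $a\neq0$, the absence of roots gives $F(\U x)=\U0\iff\U x=\U0$. The degenerate case $a=0$ has the extra zero $(0,1,0)$ and must be handled separately. I would treat it as the ``root at infinity'' of \eqref{eq:condition_oldmain}, i.e.\ the polynomial dropping degree, and check how the statement's convention covers it.

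\emph{Step 2 (injectivity).} Suppose $F(\U u)=\mu F(\U v)$ with $\U u\not\sim\U v$. I would mimic Step 1, using the third coordinate to parametrize. The natural substitution suggested by Step 1 is the ``curve'' $x=1,\ y=z^{\sigma+1}$. More generally, I would write points as $(1,t,z)$ (the case $x=0$ being handled directly as above) and use $h$ to express $t$. The aim is to eliminate variables between $f(\U u)=\mu f(\U v)$, $g(\U u)=\mu g(\U v)$ and $h(\U u)=\mu h(\U v)$. Because $f,g,h$ are $\F_q$-linear in their $\sigma$-twisted arguments, differences such as $x_u^\sigma y_u-\mu x_v^\sigma y_v$ factor nicely. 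The target is to reduce the system to the vanishing of \eqref{eq:condition_oldmain} at an explicit rational expression $\xi(\U u,\U v,\mu)\in\F_q$, in the same way that $g$ collapsed to $z^\sigma f$ in Step 1. The hypothesis that \eqref{eq:condition_oldmain} has no root then gives a contradiction.

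\emph{Expected obstacle.} The hard part is the elimination in Step 2: finding the right combination and change of variables that turns a collision into a root of the single univariate equation. My first attempt would be to exploit equation \eqref{eq:aut} and the structure of $h$ to normalize one of the points. If $v$ has $x\neq0$, I can scale it to $(1,s,w)$, and a linear substitution of the type $y\mapsto y-sx$ that preserves the shape of $f,g,h$ up to changing $a,b,c$ might move it to $(1,0,0)$. Failing such a symmetry, I would fall back on a counting argument. Namely, for each target point $\U w$ I would show that the fibre equation $F(\U x)=\lambda\U w$ reduces, after eliminating with the third coordinate, to an equation of the same $\sigma$-polynomial shape, having at most one admissible solution class.
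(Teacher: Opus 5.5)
Your Step~1 is correct. For $a\neq 0$ it shows that $F$ has a nontrivial zero exactly when \eqref{eq:condition_oldmain} has a root, which gives well-definedness and the ``only if'' direction. Your worry about $a=0$ is also justified: $F(0,1,0)=\U 0$ then, so the statement as quoted needs $a\neq0$. In the form $x^{\sigma^2+\sigma+1}+cx^{\sigma^2+\sigma}+bx^{\sigma^2}+a=0$ used in \cite{comm_paper}, the case $a=0$ is exactly your ``root at infinity'' $x=0$. This paper only quotes the theorem from \cite{comm_paper}, so there is no in-text proof to compare with.

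The genuine gap is Step~2. Injectivity is the whole content of the ``if'' direction, and you give no argument for it: no elimination is carried out and no $\xi$ is produced. The normalization is not available. The only symmetries you have, \eqref{eq:aut}, are scalar and act trivially on $\PN{2}(\F_q)$, and nothing gives a transitive group preserving the shape of $(f,g,h)$ and the no-root hypothesis. The counting fallback (``at most one class per fibre'') just restates injectivity. More seriously, the target is misdirected. A collision does not lead to a root of \eqref{eq:condition_oldmain} itself but of a different, dual polynomial. Linking the two needs the left/right symmetry of linear divisors in $\F_q[t;\sigma]$, i.e.\ Proposition~\ref{prop:divisor}, not an elimination.

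Here is one way to fill it. Put $\Lambda\U u=(-y,\,z,\,-x-by)$ and $\Pi\U w=(w_3,\,-w_1,\,-cw_1-aw_2)$. One checks that
\[
F(\U u)=\Pi\U u^{\sigma}\times\Lambda\U u ,
\]
using coordinatewise $\sigma$ and the standard cross product, with $\det\Lambda=1$ and $\det\Pi=a$. So $\overline F(P)$ is the intersection of two lines depending on $P$ and on $P^\sigma$. Your Step~1 is precisely the condition $\Lambda\U u\sim\Pi\U u^\sigma$.

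Now suppose $\overline F(P)=\overline F(Q)=\langle\U r\rangle$ with $P\neq Q$. Then $\U r$ is orthogonal to $\Lambda\U u$ and $\Pi\U u^\sigma$, and likewise for $\U v$. Hence $\U u,\U v$ are both orthogonal to $\Lambda^{T}\U r$ and to $(\Pi^{T}\U r)^{\sigma^{-1}}$. Since $\U u,\U v$ are independent, this forces $\Pi^T\U r\sim(\Lambda^T\U r)^{\sigma}$. Normalizing $r_3=1$, this becomes
\[
\mu^{\sigma^2+\sigma+1}-c^{\sigma^2}\mu^{\sigma+1}+b^{\sigma}\mu-a=0\quad\text{for some }\mu\in\F_q^\times .
\]
Up to $\mu\mapsto-\mu$ and scaling, Theorem~\ref{thm:ore} says this is equivalent to $t^3+c^{\sigma^2}t^2+b^{\sigma}t+a$ having a linear right divisor. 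Next pass from $s(t)=at^3+bt^2+ct+1$ to $t^{-3}s\in\F_q[u;\sigma^{-1}]$, where $u=t^{-1}$. Apply the anti-isomorphism $\alpha u^i\mapsto t^i\alpha$ onto $\F_q[t;\sigma]$, and then $\sigma^3$ on coefficients. This shows the condition is equivalent to $s$ having a linear \emph{left} divisor. Only then does Proposition~\ref{prop:divisor} turn it into a linear right divisor, i.e.\ a root of \eqref{eq:condition_oldmain} by Theorem~\ref{thm:ore}. Without this step, or an equivalent argument, your proposal does not establish injectivity.
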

Note that in~\cite{comm_paper}, Eq.~\eqref{eq:condition_oldmain} is written as $x^{\sigma^2+\sigma+1} + cx^{\sigma^2+\sigma} + bx^{\sigma^2} + a = 0$. As shown in~\cite[Lemma B.4.]{comm_paper}, these two conditions are equivalent.
\subsection{Main results} 
The following is the first main 
theorem of this paper, establishing the APN property of the functions
of Theorem \ref{thm:all_char_old_paper} in characteristic 2.

\begin{theorem} \label{thm_main}
Let $q = 2^m$ and $F_1,F_2,F_3 $ be defined as
\begin{align*}
F_1(x,y,z) &=  x^{\sigma+1} + ay^{\sigma}z + bx^{\sigma}y + cx^{\sigma}z,\\
F_2(x,y,z) &= ay^{\sigma+1} +  z^{\sigma}x + bz^{\sigma}y + cx^{\sigma}y,\\
F_3(x,y,z) &=  z^{\sigma+1} +  x^{\sigma}y,
\end{align*}
where $a,b,c \in \F_q$, $\sigma=2^k$, $1\leq k <m$, $d=\gcd(k,m)$ and $a\neq 0$. 
Define $F \in \SQH(3,q)$ via 
\[
F = ({F_1,F_2,F_3})\colon \F_q^3\rightarrow \F_q^3. 
\]

If the equation 
\begin{equation} \label{eq:condition_main}
    	ax^{\sigma^2+\sigma+1}+bx^{\sigma+1}+cx+1=0
\end{equation}
has no solution $x \in \F_q$ then the differential uniformity of $F$ is $2^d$.
\end{theorem}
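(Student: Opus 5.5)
Since $F$ is quadratic, I would start from the standard reduction. For $\U u \ne \U 0$ the map
\[
D_{\U u}(\U x) = F(\U x+\U u)+F(\U x)+F(\U u)
\]
is $\F_2$-linear in $\U x$. Hence $\DDT_F(\U u,\U v)$ is either $0$ or $\#\ker D_{\U u}$, and
\[
\Delta_F=\max_{\U u\ne \U 0}\#\ker D_{\U u}.
\]
So it suffices to show that $\ker D_{\U u}=\{\lambda \U u : \lambda\in\F_{2^d}\}$ for every $\U u\ne\U 0$.

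\textbf{Lower bound.} Write $F_j(\U x)=\sum \alpha\, x_r^{\sigma}x_s$ with all monomials of type $(\sigma,1)$. Then $F(\lambda\U u)=\lambda^{\sigma+1}F(\U u)$ for $\lambda\in\F_q$, so
\[
D_{\U u}(\lambda \U u)=\bigl((1+\lambda)^{\sigma+1}+\lambda^{\sigma+1}+1\bigr)F(\U u)=(\lambda^{\sigma}+\lambda)F(\U u).
\]
This vanishes when $\lambda\in\F_{2^\sigma}\cap\F_q=\F_{2^d}$, so $\#\ker D_{\U u}\ge 2^d$.

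\textbf{Upper bound, dependent case.} The hypothesis on \eqref{eq:condition_main} lets me invoke Theorem~\ref{thm:all_char_old_paper} with $p=2$; the sign in $h$ is irrelevant in characteristic $2$. So $\overline F$ is a bijection of $\PN{2}(\F_q)$, and in particular $F(\U u)\ne\U 0$ for $\U u\ne\U 0$. The computation above then shows that among multiples $\lambda\U u$, only those with $\lambda\in\F_{2^d}$ lie in the kernel.

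\textbf{Upper bound, independent case.} It remains to show that $D_{\U u}(\U x)=\U 0$ forces $\U x\in\F_q\U u$. I would split the bilinear form as $D_{\U u}(\U x)=B(\U x,\U u)+B(\U u,\U x)$, where
\[
B(\U x,\U y)=\Bigl(\sum\alpha\, x_r^{\sigma}y_s\Bigr)_j .
\]
Then I would argue by contradiction. Assuming $\U x,\U u$ independent, write out the three coordinate equations explicitly:
\begin{align*}
&x_1^\sigma u_1+u_1^\sigma x_1 + a(y\text{-}z\text{ terms}) + b(\cdots)+c(\cdots)=0,\\
&a(x_2^\sigma u_2+u_2^\sigma x_2)+\cdots=0,\\
&x_3^\sigma u_3+u_3^\sigma x_3+x_1^\sigma u_2+u_1^\sigma x_2=0.
\end{align*}
I would use the semilinear symmetry \eqref{eq:aut} to normalize one nonzero coordinate of $\U u$ to $1$. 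Then I would split into cases according to which of $u_1,u_2,u_3$ vanish. The last equation is the simplest, so I would solve it first for one unknown in terms of the others, substitute into the first two, and eliminate further. The goal is to reduce to a single one-variable equation in the ratio $t$ of suitable coordinates, which should have the shape
\[
a t^{\sigma^2+\sigma+1}+bt^{\sigma+1}+ct+1=0
\]
(possibly after the substitution of \cite[Lemma B.4]{comm_paper}). That equation has no root by hypothesis, giving the contradiction. Degenerate cases, where the eliminating coordinate vanishes, should instead yield $\U x\in\F_q\U u$ directly, or $a=0$, which is excluded.

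\textbf{Main obstacle.} I expect the elimination step in the independent case to be the hardest part: arranging the substitutions so that the resulting polynomial is exactly the one in \eqref{eq:condition_main}. The fact that the bijectivity criterion of Theorem~\ref{thm:all_char_old_paper} involves the same polynomial suggests mimicking its proof from \cite{comm_paper}, now applied to the linearized system. The natural vehicle would be that, for independent $\U x,\U u$, the identity $F(\U x+\U u)=F(\U x)+F(\U u)$ yields a projective collision of $\overline F$ along the line through $\U x$ and $\U u$. If that cannot be made to work, I would fall back on brute-force case analysis with resultants.
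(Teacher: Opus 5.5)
Your reduction to $\ker D_{\U u}$, the lower bound $\U u\,\F_{2^d}\subseteq\ker D_{\U u}$ (it should read $\F_{2^k}\cap\F_q$, not $\F_{2^\sigma}\cap\F_q$), and the dependent case are all fine. The gap is the independent case, which carries the whole content of the theorem. There you only describe an elimination that you hope will land on \eqref{eq:condition_main}. Nothing shows that the linearized system $B(\U x,\U u)=B(\U u,\U x)$ reduces to that polynomial; the bijectivity criterion of \cite{comm_paper} is about the non-linear system $F(\U x)=\mu F(\U y)$, not this one.

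Your ``natural vehicle'' works only when $m/d$ is even. In that case $\gcd(\sigma+1,q-1)=2^d+1$, so $\U x_0=\U u/(\lambda+1)$ with $\lambda^{2^d+1}=1\neq\lambda$ satisfies $F(\U x_0+\U u)=F(\U x_0)$. Then for $\U x\in\ker D_{\U u}\setminus\F_q\U u$, the vectors $\U v=\U x_0+\U x$ and $\U v+\U u$ are independent and $F(\U v+\U u)=F(\U v)$, which is a collision of $\overline F$. That is essentially the paper's Theorem~\ref{thm:unif}.

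For $m/d$ odd, $F$ is a bijection of $\F_q^3$, so no such $\U x_0$ exists. Moreover, bijectivity of $\overline F$ alone cannot give the conclusion. For example, $(x^{\sigma+1},y^{\sigma+1},z^{\sigma+1})\in\SQH(3,q)$ is then a bijection of $\F_q^3$ and induces a bijection of $\PN{2}(\F_q)$, yet $\ker D_{(1,1,1)}=\F_{2^d}^3$.

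Concretely, on the plane spanned by $\U x\in\ker D_{\U u}$ and $\U u$ one has $F(\alpha\U x+\beta\U u)=\alpha^{\sigma+1}F(\U x)+\beta^{\sigma+1}F(\U u)+(\alpha^\sigma\beta+\alpha\beta^\sigma)W$ with $W=B(\U x,\U u)$. Since $t\mapsto t^{\sigma+1}$ is bijective on $\F_q$, this map is injective on the projective line whenever $F(\U x),F(\U u),W$ are independent. So no collision is forced along the line through $\U x$ and $\U u$.

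Any argument for $m/d$ odd must therefore use the specific shape of $F$, not just projective bijectivity, and your proposal does not supply this. The paper does it by lifting instead of eliminating:
\begin{enumerate}
\item By Ore's Theorem~\ref{thm:ore} and the fact that $\lclm(t-b,t-b^q)\in\F_q[t;\sigma]$, absence of roots of \eqref{eq:condition_main} in $\F_q$ persists to $\F_{q^2}$ (Corollary~\ref{cor:roots}).
\item Iterating, it persists to $\F_{q^{2^{\ell+1}}}$, where $k=2^\ell k'$ with $k'$ odd. There $2^{\ell+1}m/D$ is even.
\item The even case then gives $\ker L_{G,\U y}=\U y\,\F_{2^D}$.
\item Intersecting with $\F_q^3$ gives $\U y\,\F_{2^d}$, because $\gcd(D,m)=d$.
\end{enumerate}
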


In particular, by choosing $d=1$ in Theorem \ref{thm_main}, 
we construct APN functions. 

In \cite{comm_paper}, we proved the odd $p$ case and showed that the 
corresponding functions are PN. In this paper, we address even 
characteristic and APN functions. The tools to prove this result 
differ considerably.

For the sequel, we will need the following well-known lemma.

\begin{lemma} \label{lem:gcd}
    Let $d=\gcd(m,n)$ and $p$ be a prime. Then 
    \begin{enumerate}
        \item  $\gcd(p^m-1,p^n-1)=p^d-1$
        \item $\gcd(p^m+1,p^n-1)=\begin{cases}
        1, & \text{if } n/d \text{ is odd and } p \text{ is even,}\\
        2, & \text{if } n/d \text{ is odd and } p \text{ is odd,}\\
       p^d+1, & \text{if } n/d \text{ is even.}\end{cases}$
    \end{enumerate}
\end{lemma}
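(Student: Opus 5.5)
We need a plan for Lemma \ref{lem:gcd}. This is the classical gcd lemma, and we will prove it through the Euclidean algorithm on exponents.

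For part (1), we use the identity $p^m-1 = p^{m-n}(p^n-1) + (p^{m-n}-1)$ for $m>n$. It gives
\[
\gcd(p^m-1,p^n-1)=\gcd(p^{m-n}-1,p^n-1).
\]
So the pair of exponents $(m,n)$ transforms exactly as in the subtractive Euclidean algorithm. Induction on $m+n$ then yields $p^{\gcd(m,n)}-1$, with base case $\gcd(p^d-1,p^0-1)=\gcd(p^d-1,0)=p^d-1$. Equivalently, one can argue via orders: a prime power dividing both $p^m-1$ and $p^n-1$ means the order of $p$ modulo that prime power divides both $m$ and $n$, hence divides $d$.

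For part (2), let $g=\gcd(p^m+1,p^n-1)$. Since $p^m+1$ divides $p^{2m}-1$, we get
\[
g \mid \gcd(p^{2m}-1,p^n-1)=p^{\gcd(2m,n)}-1
\]
by part (1).

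Case $n/d$ odd. Then $\gcd(2m,n)=d$, so $g \mid p^d-1$. Because $d \mid m$, we have $p^m\equiv 1 \pmod{p^d-1}$, so $p^m+1\equiv 2 \pmod{p^d-1}$. Therefore $g \mid \gcd(2,p^d-1)$.
\begin{itemize}
\item If $p$ is even, $p^d-1$ is odd, so $g=1$.
\item If $p$ is odd, $g \mid 2$, and $2$ divides both numbers, so $g=2$.
\end{itemize}

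Case $n/d$ even. Then $\gcd(2m,n)=2d$, and moreover $m/d$ must be odd since $\gcd(m/d,n/d)=1$. The divisibility checks are:
\begin{itemize}
\item $p^d+1 \mid p^m+1$, because $m/d$ is odd and $p^d\equiv -1$ gives $p^m\equiv -1$.
\item $p^d+1 \mid p^{2d}-1 \mid p^n-1$, because $2d \mid n$.
\end{itemize}
So $p^d+1 \mid g$. In the other direction, $g \mid p^{2d}-1=(p^d-1)(p^d+1)$. Modulo $p^d-1$ we have $p^m+1\equiv 2$, so $\gcd(g,p^d-1)\mid 2$.
\begin{itemize}
\item For $p$ even this forces $g=p^d+1$ directly.
\item For $p$ odd we must exclude the extra factor $2$, i.e. show $g \ne 2(p^d+1)$. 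Here the remaining factor of $g/(p^d+1)$ lies in $\gcd\bigl((p^m+1)/(p^d+1),\,p^d-1\bigr)$. Since $(p^m+1)/(p^d+1)=\sum_{i=0}^{m/d-1}(-p^d)^i$ has an odd number of odd terms, it is odd, and the factor is $1$.
\end{itemize}

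The main obstacle, if any, is this last point: the careful $2$-adic bookkeeping in the odd-characteristic, $n/d$ even case. Since the statement is classical, we could alternatively just cite it; characteristic $2$, which is the case used in the paper, avoids this issue entirely.
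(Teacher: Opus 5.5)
Your proof is correct. The paper gives no proof of this lemma; it quotes it as well known and only needs the case $p=2$. So there is no competing argument to compare with. Your Euclidean-algorithm proof of (1) is complete, and so is your reduction of (2) to (1) via $p^m+1 \mid p^{2m}-1$ together with $\gcd(2m,n)=d$ or $2d$ according to the parity of $n/d$. In the odd-$p$, $n/d$ even case, your cofactor step is sound. Write $g=(p^d+1)h$. Then $h$ divides $\gcd(g,p^d-1)$, which divides $2$, and $h$ also divides the odd number $(p^m+1)/(p^d+1)$, so $h=1$.

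One optional streamlining removes the $2$-adic bookkeeping entirely. Since $m/d$ is odd,
\[
\frac{p^m+1}{p^d+1}=\sum_{i=0}^{m/d-1}(-p^d)^i\equiv\sum_{i=0}^{m/d-1}(-1)^i= 1 \pmod{p^d-1},
\]
so this cofactor is coprime to $p^d-1$ for every prime $p$. This gives $g=p^d+1$ uniformly, without separating odd and even $p$.
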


The following theorem determines the image set size of the 
functions in Theorem~\ref{thm_main}. It is proven in 
\cite{comm_paper} only in odd characteristic. The proof also holds 
in even characteristic. We include the simple proof for convenience.

\begin{theorem}\label{thm:sqh}
Let $F \in \SQH(\ell,q)$ with companion automorphism $\sigma \in \Aut(\F_q)$.
If $\overline{F}$ is bijective on $\PN{\ell-1}(\F_q)$ then $F$ is 
$\gcd(\sigma+1,q-1)$-to-$1$ on $\F_q^\ell \setminus \{\U 0\}$.
\end{theorem}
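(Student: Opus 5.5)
The plan is to show that every nonzero value of $F$ has exactly $\gcd(\sigma+1,q-1)$ preimages. The two ingredients are the homogeneity of $F$ and the injectivity of $\overline{F}$. Write $e=\gcd(\sigma+1,q-1)$.

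First I record the homogeneity relation. Each coordinate $F_i$ is a $\sigma$-semiquadratic form, so $F(\lambda\U x)=\lambda^{\sigma+1}F(\U x)$ for all $\lambda\in\F_q^\times$; this is the relation \eqref{eq:aut} with all $\sigma_i=\sigma$. Since $\overline{F}$ is a well-defined map on $\PN{\ell-1}(\F_q)$, we have $F(\U x)\ne\U 0$ for every $\U x\ne\U 0$. Thus $F$ maps $\F_q^\ell\setminus\{\U 0\}$ into itself.

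Next I fix $\U x\ne \U 0$ and determine the fibre $F^{-1}(F(\U x))\cap(\F_q^\ell\setminus\{\U 0\})$. Suppose $\U y\ne\U 0$ satisfies $F(\U y)=F(\U x)$. Then $\overline{F}([\U y])=\overline{F}([\U x])$, and injectivity of $\overline{F}$ gives $[\U y]=[\U x]$, that is, $\U y=\lambda\U x$ for some $\lambda\in\F_q^\times$. The condition $F(\lambda\U x)=F(\U x)$ becomes $\lambda^{\sigma+1}F(\U x)=F(\U x)$. Since $F(\U x)\ne\U 0$, this holds exactly when $\lambda^{\sigma+1}=1$. Conversely, every such $\lambda$ gives a preimage $\lambda\U x$. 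Because $\U x\ne \U 0$, the map $\lambda\mapsto\lambda\U x$ is injective, so the fibre has the same size as the kernel of $\lambda\mapsto\lambda^{\sigma+1}$ on the cyclic group $\F_q^\times$ of order $q-1$.

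Finally, in a cyclic group of order $q-1$, the number of solutions of $\lambda^{\sigma+1}=1$ is $\gcd(\sigma+1,q-1)=e$. Hence every nonzero image value has exactly $e$ preimages, so $F$ is $e$-to-$1$ on $\F_q^\ell\setminus\{\U 0\}$. Lemma~\ref{lem:gcd} can be quoted afterwards to evaluate $e$ (with $\sigma=p^k$), but it is not needed for the statement itself.

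No step is a real obstacle. The only point needing care is that the hypothesis "$F(\U x)=\U 0$ iff $\U x=\U 0$" from the definition of $\PSQH$ is what permits cancelling $F(\U x)$ in $\lambda^{\sigma+1}F(\U x)=F(\U x)$.
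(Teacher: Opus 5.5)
Your proposal is correct and follows essentially the same argument as the paper: injectivity of $\overline{F}$ forces any two nonzero preimages of a common value to be proportional, and the homogeneity $F(\lambda \underline{x})=\lambda^{\sigma+1}F(\underline{x})$ reduces the fibre count to the number of solutions of $\lambda^{\sigma+1}=1$ in the cyclic group $\F_q^\times$, namely $\gcd(\sigma+1,q-1)$. You also note explicitly that cancelling $F(\underline{x})$ relies on $F(\underline{x})\neq \underline{0}$ for $\underline{x}\neq\underline{0}$, a step the paper leaves implicit.
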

\begin{proof}
Suppose $F(\underline{x_1})=F(\underline{x_2})$ 
for $\underline{x_1},\underline{x_2} \in \F_q^\ell \setminus \{\U 0\}$. 
Then clearly 
$\overline{F}(\underline{x_1})=\overline{F}(\underline{x_2})$,
and since $\overline{F}$ is bijective on $\PN{n-1}(\F_q)$, 
we have $\underline{x_2}=\lambda \underline{x_1}$ for some 
$\lambda \in \F_q^\times$. So 
$F(\underline{x_2})=\lambda^{\sigma+1}F(\underline{x_1})=F(\underline{x_1})$ 
if and only if $\lambda^{\sigma+1}=1$, 
i.e., $\lambda$ is a $(\sigma+1)$-st root of unity in $\F_q^\times$, 
and the number of such roots of unity is exactly $\gcd(\sigma+1,q-1)$. 
\end{proof}

The following theorem is the next main result, 
precisely classifying the image set sizes of the functions we consider. 
In particular, we prove that our family yields APN permutations. 
\begin{theorem} \label{thm:imageset}
    Let $q = 2^m$ and $F_1,F_2,F_3 $ be defined as
\begin{align*}
F_1(x,y,z) &=  x^{\sigma+1} + ay^{\sigma}z + bx^{\sigma}y + cx^{\sigma}z,\\
F_2(x,y,z) &= ay^{\sigma+1} +  z^{\sigma}x + bz^{\sigma}y + cx^{\sigma}y,\\
F_3(x,y,z) &=  z^{\sigma+1} -  x^{\sigma}y,
\end{align*}
where $a,b,c \in \F_q$, $\sigma=2^k$, $1\leq k <m$, $d=\gcd(k,m)$ and $a\neq 0$. 
Let 
\[
F = ({F_1,F_2,F_3})\colon \F_q^3\rightarrow \F_q^3
\]
as in Theorem~\ref{thm_main}.
Assume further that
\begin{equation*}
    	ax^{\sigma^2+\sigma+1}+bx^{\sigma+1}+cx+1=0
\end{equation*}
has no solution $x \in \F_q$. Then $F$ is 
\begin{itemize}
    \item $(2^d+1)$-to-one on $\F_q^3\setminus \{\U 0\}$ if $m/d$ is even,
    \item bijective on $\F_q^3$ if $m/d$ is odd.
\end{itemize}
\end{theorem}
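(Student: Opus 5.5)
The plan is to combine Theorem~\ref{thm:all_char_old_paper}, Theorem~\ref{thm:sqh} and Lemma~\ref{lem:gcd}. In characteristic $2$ the sign in $F_3$ is irrelevant, because $-x^\sigma y = x^\sigma y$. So the map $F$ here is exactly the map $(f,g,h)$ of Theorem~\ref{thm:all_char_old_paper} with $p=2$, and also the map of Theorem~\ref{thm_main}.

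First I invoke Theorem~\ref{thm:all_char_old_paper}, which holds in arbitrary characteristic. Since \eqref{eq:condition_main} has no solution in $\F_q$, the induced map $\overline{F}$ lies in $\PSQH(3,q)$ and is bijective on $\PN{2}(\F_q)$. Membership in $\PSQH(3,q)$ includes the condition $F(\U x)=\U 0 \iff \U x=\U 0$, so $F$ maps $\F_q^3\setminus\{\U 0\}$ into itself and $F(\U 0)=\U 0$.

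Next I apply Theorem~\ref{thm:sqh} with companion automorphism $\sigma=2^k$. It shows that $F$ is $\gcd(2^k+1,2^m-1)$-to-$1$ on $\F_q^3\setminus\{\U 0\}$.

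Lemma~\ref{lem:gcd}(2), applied with $p=2$ and the pair $(k,m)$, gives the value of this gcd. Here $d=\gcd(k,m)$ and the relevant quotient is $m/d$.
\begin{itemize}
\item If $m/d$ is even, the gcd is $2^d+1$, so $F$ is $(2^d+1)$-to-one on $\F_q^3\setminus\{\U 0\}$.
\item If $m/d$ is odd, the gcd is $1$, so $F$ is injective on $\F_q^3\setminus\{\U 0\}$. Since this set is mapped into itself and is finite, $F$ permutes it. Together with $F(\U 0)=\U 0$, this shows $F$ is bijective on $\F_q^3$.
\end{itemize}

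The only step needing care is confirming that Theorem~\ref{thm:all_char_old_paper} genuinely covers $p=2$, including the requirement that $F$ vanish only at $\U 0$. I would check this directly: a nonzero common root of $F_1,F_2,F_3$ would represent a point of $\PN{2}(\F_q)$ at which $\overline{F}$ is undefined. The hypothesis $a\neq 0$ is not used in this argument beyond its role in Theorem~\ref{thm:all_char_old_paper}; if $a=0$, equation \eqref{eq:condition_main} has degree at most $\sigma+1$, and the bijectivity statement there still applies.
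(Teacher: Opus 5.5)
Your proposal is correct and matches the paper's proof: Theorem~\ref{thm:all_char_old_paper} gives bijectivity of $\overline{F}$ on $\PN{2}(\F_q)$, Theorem~\ref{thm:sqh} makes $F$ $\gcd(2^k+1,2^m-1)$-to-$1$ on $\F_q^3\setminus\{\U 0\}$, and Lemma~\ref{lem:gcd} evaluates the gcd (your explicit check that $F(\U 0)=\U 0$ and that nonzero vectors map to nonzero vectors is a useful detail the paper leaves implicit). One correction to your closing remark: $a\neq 0$ is genuinely necessary, because $F(0,1,0)=(0,a,0)$ vanishes when $a=0$ (for instance $a=b=c=0$ turns the equation into $1=0$, yet $F$ is not injective); the condition as restated in Theorem~\ref{thm:all_char_old_paper} is equivalent to the original form $x^{\sigma^2+\sigma+1}+cx^{\sigma^2+\sigma}+bx^{\sigma^2}+a=0$ only for $a\neq 0$, and for $a=0$ the original form has the root $x=0$.
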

In particular, $F$ is an APN permutation if and only if $d=1$ and $m$ is odd.
\begin{proof}
   By Theorem~\ref{thm:all_char_old_paper}, $\overline{F}$ is bijective on $\P^2(\F_q)$. Then Theorem~\ref{thm:sqh} implies that $F$ is 
$\gcd(2^k+1,q-1)$-to-$1$ on $\F_q^3 \setminus \{\U 0\}$. The greatest common divisor is then evaluated using Lemma~\ref{lem:gcd}.
\end{proof}


\section{Proof of Theorem~\ref{thm_main}}

In this section, we will prove Theorem~\ref{thm_main}. 
We distinguish the cases $m/d$ even 
(which turns out to be the easier case) and $m/d$ odd. 
For the proofs, we will introduce the following definition.

\begin{definition}
Let $q=p^m$ and define for $F\in \SQH(\ell,q)$ and  any $\U a \in \F_q^\ell$
the function $L_{\U{a}}(\U{x}) \colon \F_{q}^\ell\rightarrow \F_q^\ell$ 
via $L_{\U a}(\U x)=F(\U x + \U a)-F(\U x)- F(\U a)$. We call the largest 
subfield $\K \subseteq \F_q$ such that $L_{\U a}$ is $\K$-linear 
for all $a \in \F_q^\ell \setminus \{\U 0\}$ its \emph{core}.
\end{definition}

The following is immediate.
\begin{proposition}\label{prop_core}
Let $F \in \SQH(\ell,p^m)$ with companion automorphism 
$\sigma\colon x\mapsto x^{p^k}$ and $d=\gcd(k,m)$.  
Then its core is $\F_{p^{d}}$. 
\end{proposition}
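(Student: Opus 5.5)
The plan is to compute $L_{\U a}$ explicitly and read off both directions: that $L_{\U a}$ is $\F_{p^d}$-linear, and that no strictly larger subfield works. Write $F=(F_1,\ldots,F_\ell)$ with $F_i(\U x)=\U x\, C^{(i)} (\U x^{\sigma})^T$, where $\U x^\sigma$ denotes coordinatewise application of $\sigma$. Since $\sigma$ is additive, bilinearity gives
\[
L_{\U a}(\U x)_i=\U x\, C^{(i)} (\U a^{\sigma})^T+\U a\, C^{(i)} (\U x^{\sigma})^T .
\]

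First I will check that the core contains $\F_{p^d}$. The first summand is $\F_q$-linear in $\U x$. The second is additive, and for $\lambda\in\F_q$ it satisfies $\U a C^{(i)}((\lambda\U x)^\sigma)^T=\lambda^{\sigma}\,\U a C^{(i)}(\U x^\sigma)^T$. So $L_{\U a}(\lambda\U x)=\lambda L_{\U a}(\U x)$ whenever $\lambda^{p^k}=\lambda$, that is, whenever $\lambda\in\F_{p^k}\cap\F_{p^m}=\F_{p^d}$. This holds for every $\U a$, so $\F_{p^d}$ lies in the core. Subfields of $\F_q$ form a lattice under inclusion, and the set of $\lambda$ for which all $L_{\U a}$ commute with multiplication by $\lambda$ is itself a subfield. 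Hence "largest subfield" is well defined, and it equals the field of $\lambda$ commuting with every $L_{\U a}$.

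For the reverse inclusion, take $\lambda$ with $L_{\U a}(\lambda \U x)=\lambda L_{\U a}(\U x)$ for all $\U a,\U x$. Subtracting gives $(\lambda^\sigma-\lambda)\,\U a C^{(i)}(\U x^\sigma)^T=0$ for all $i$, $\U a$, $\U x$. If some $C^{(i)}\neq 0$, pick $\U a,\U x$ making this bilinear-type expression nonzero: choose $\U x^\sigma$ as a standard basis vector hitting a nonzero column, then choose $\U a$ suitably. This forces $\lambda^\sigma=\lambda$, so $\lambda\in\F_{p^d}$.

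The main obstacle is this nondegeneracy step. The statement implicitly assumes $F\neq 0$; if $F=0$ the core is all of $\F_q$. I would therefore note that the proposition is meant for nonzero $F$, or for $F$ with $\overline F$ well defined as in the setting of Theorem~\ref{thm_main}, and verify that the functions there have some nonzero $C^{(i)}$. The $x^{\sigma+1}$ term gives this immediately. The case $\sigma=\id$ (that is, $k=0$, so $d=m$) is consistent, since then the core is $\F_q$.
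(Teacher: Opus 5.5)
Your proof is correct and takes essentially the paper's approach: the paper gives no argument (it calls the statement immediate), and your explicit computation $L_{\U a}(\U x)_i=\U x\, C^{(i)}(\U a^{\sigma})^T+\U a\, C^{(i)}(\U x^{\sigma})^T$, followed by the comparison of $\lambda^\sigma$ with $\lambda$ in both directions, is exactly the routine check it leaves implicit. Your caveat is also legitimate, since the reverse inclusion needs some $C^{(i)}\neq 0$ (for $F=0$ the core is all of $\F_q$); this holds in every application in the paper because of the $x^{\sigma+1}$ term.
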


\subsection{The case $m/d$ is even}

As explained above, the following theorem 
combined with Theorem~\ref{thm:all_char_old_paper} proves 
Theorem~\ref{thm_main} for the case that $m/d$ is even. The proof 
in this case is essentially identical to the proof in the odd 
characteristic case given in~\cite{comm_paper}. We give the short proof 
for completeness.

\begin{theorem} \label{thm:unif}
Let $F \in \SQH(\ell,2^m)$ with core $\F_{2^d}$, $m/d$ even. 
If $\overline{F}$ is bijective on $\PN{\ell-1}(\F_{2^m})$ then $F$ is differentially 
$2^d$-uniform.
\end{theorem}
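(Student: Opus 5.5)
We prove the bound by showing that every nontrivial derivative of $F$ has kernel of size exactly $2^d$. Fix $\U a \ne \U 0$. Since $F$ is quadratic, the equation $F(\U x+\U a)-F(\U x)=\U v$ is equivalent to $L_{\U a}(\U x)=\U v-F(\U a)$. Here $L_{\U a}$ is $\F_{2^d}$-linear by Proposition~\ref{prop_core}. Hence every nonempty solution set is a coset of $\ker L_{\U a}$, and it suffices to show $\ker L_{\U a}=\F_{2^d}\U a$, which has exactly $2^d$ elements.

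First, the inclusion $\F_{2^d}\U a\subseteq \ker L_{\U a}$. For $\lambda\in\F_{2^d}$ we have $\lambda^\sigma=\lambda$. So
\[
L_{\U a}(\lambda\U a)=F((1+\lambda)\U a)-F(\lambda\U a)-F(\U a)=\big((1+\lambda)^{2}-\lambda^{2}-1\big)F(\U a)=0.
\]

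For the reverse inclusion, take $\U x\in\ker L_{\U a}$, so that $F(\U x+\U a)=F(\U x)+F(\U a)$. Put $\U y=\U x+\U a$, so $F(\U y)=F(\U x)+F(\U a)$. The idea is to exploit the $\sigma$-semiquadratic structure along the line $\U x+t\U a$ for $t\in\F_q$. Expanding gives
\[
F(\U x+t\U a)=F(\U x)+t^{\sigma+1}F(\U a)+B(\U x,t\U a),
\]
where $B(\U x,\U a)=L_{\U a}(\U x)$ is biadditive with $B(\U x,t\U a)=t\,C_1+t^\sigma C_2$. The two pieces $C_1=\sum_i \U x\,C^{(i)}\U a^{\sigma}$ and $C_2=\sum_i \U a\,C^{(i)}\U x^{\sigma}$ (componentwise) satisfy $C_1+C_2=0$, because $\U x\in\ker L_{\U a}$. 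Hence
\[
F(\U x+t\U a)=F(\U x)+t^{\sigma+1}F(\U a)+(t+t^\sigma)C_1 .
\]
Now I use projective bijectivity. If $\U x\notin\F_q\U a$, the vectors $\U x+t\U a$ for different $t$ are pairwise non-proportional. I then want a $t$ with $F(\U x+t\U a)$ proportional to $F(\U x+t'\U a)$ for some $t'\ne t$, or with $F(\U x+t\U a)=0$, which would give a contradiction.

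A cleaner route, which I would try first, runs through $F(\U a)$. Since $m/d$ is even, $\gcd(\sigma+1,q-1)=2^d+1>1$ by Lemma~\ref{lem:gcd}, so $t\mapsto t^{\sigma+1}$ is far from injective and $F(\U a)$ has many $(\sigma+1)$-st roots. I would look for $\mu\in\F_q^\times$ and $t$ such that $F(\U x+t\U a)=\mu^{\sigma+1}F(\U x)$ with $\U x+t\U a\notin\F_q\U x$. That equality contradicts injectivity of $\overline F$ via Theorem~\ref{thm:sqh}-type reasoning.

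The case $\U x=s\U a$ with $s\in\F_q$ is handled separately. There $L_{\U a}(s\U a)=(s+s^\sigma)F(\U a)$, and $F(\U a)\neq0$ because $\overline F$ is defined. So $s^\sigma=s$, that is, $s\in\F_{2^d}$, which is exactly the claim on this line.

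The main obstacle is the case $\U x\notin\F_q\U a$: I must actually produce a collision of $\overline F$ from $C_1$ and the relation above. The fallback is to mimic the odd-characteristic argument of \cite{comm_paper}. Write $\U x=\lambda\U u$ and $\U a=\lambda'\U w$ with $\lambda,\lambda'$ chosen so that $F(\U x+\U a)=F(\U x)+F(\U a)$ becomes $F(\U u')=F(\U u'')$ for two non-proportional vectors. This uses that $-1$, or more generally the needed scalars, are $(\sigma+1)$-st powers when $m/d$ is even. In characteristic $2$, $-1=1$, so the substitute is the elements $\omega$ with $\omega^{\sigma+1}=1$ and $\omega\ne1$ that exist exactly when $m/d$ is even. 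The key step is to choose such an $\omega$ so that $\omega\U x+\U a$ and $\U x+\U a$ collide in $\overline F$.
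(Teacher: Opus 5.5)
Your reduction to showing $\ker L_{\U a}=\F_{2^d}\U a$ is sound. The inclusion $\F_{2^d}\U a\subseteq\ker L_{\U a}$ and the on-line case $\U x=s\U a$ are both correct. But the whole content of the theorem lies in the case $\U x\notin\F_q\U a$, and there you only list strategies. No collision of $\overline F$ is ever produced, so the proof is incomplete.

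The obstruction is structural. A kernel element satisfies $F(\U x+\U a)=F(\U x)+F(\U a)$, which is a sum relation rather than a collision, so injectivity of $\overline F$ cannot be applied to it directly. You do not show that either your line expansion $F(\U x+t\U a)=F(\U x)+t^{\sigma+1}F(\U a)+(t+t^\sigma)C_1$ or the unspecified choice of $\omega$ making $\omega\U x+\U a$ and $\U x+\U a$ collide actually yields a collision.

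The missing idea is to work with the inhomogeneous equation instead. In characteristic $2$, $F(\U x+\U a)=F(\U x)$ is exactly $L_{\U a}(\U x)=F(\U a)$, and its solutions are genuine collisions of $F$. Neither $\U x$ nor $\U x+\U a$ can be $\U 0$, since $F(\U a)\neq\U 0$. So by Theorem~\ref{thm:sqh} and Lemma~\ref{lem:gcd}, a solution means $\U x+\U a=\lambda\U x$ with $\lambda^{2^d+1}=1$ and $\lambda\neq 1$, i.e. $\U x=\U a/(\lambda+1)$. There are exactly $2^d$ such $\lambda$ precisely because $m/d$ is even. This solution set is a coset of $\ker L_{\U a}$, so $|\ker L_{\U a}|=2^d$ at once; that is the paper's proof.

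The same idea also repairs your kernel argument directly:
\begin{enumerate}
\item Fix $\omega\neq 1$ with $\omega^{\sigma+1}=1$ and put $\U x_0=\U a/(1+\omega)$. Then $\U x_0+\U a=\omega\U x_0$, so $F(\U x_0+\U a)=F(\U x_0)$ and hence $L_{\U a}(\U x_0)=F(\U a)$.
\item For any $\U x\in\ker L_{\U a}$, additivity gives $L_{\U a}(\U x_0+\U x)=F(\U a)$, i.e. $F(\U x_0+\U x+\U a)=F(\U x_0+\U x)$. Neither argument is $\U 0$, again because $F(\U a)\neq\U 0$.
\item This collision forces $\U x_0+\U x+\U a=\mu(\U x_0+\U x)$ for some $\mu\neq 1$. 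Hence $\U x_0+\U x=\U a/(1+\mu)\in\F_q\U a$, and so $\U x\in\F_q\U a$.
\end{enumerate}
This reduces everything to the line case you already settled.
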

\begin{proof}
By Theorem~\ref{thm:sqh} and Lemma~\ref{lem:gcd}, $F$ is 
$(2^d+1)$-to-$1$ on $\F_{2^m}^\ell \setminus \{\U 0\}$, more precisely, 
$F(\U x) = F(\U y)$ if and only if $\U y = \lambda \U x$ for 
some $\lambda \in \F_{2^m}$ such that $\lambda^{2^d+1}=1$.
This means that for every fixed nonzero $\U a$,
the equation $D_{\U{a}}(\U x)=F(\U x+\U a)+F(\U x)=\U 0$ 
has exactly as many roots as there are solutions to 
$\U x+\U a=\lambda \U x$ where  $\lambda^{2^d+1}=1$. 
Since $\U a\neq \U 0$, there is no solution for $\lambda=1$; in all other cases rearranging yields $\U x=\U a/(\lambda-1)$. For each $\lambda \neq 1$ we thus get exactly one solution and $D_{\U a}(\U x)$ has exactly $2^d$ roots. Since $L_{\U a}=D_{\U a}+F(\U a)$ is linear over $\F_{2^d}$, the number of solutions of $L_{\U a}(\U x)=\U 0$ is thus either $0$ or $2^d$, in particular, $F$ is differentially $2^d$-uniform.
\end{proof}

\begin{proof}[Proof of Theorem~\ref{thm_main} if $m/d$ is even]
Let $F \in \SQH(3,2^m)$ be the function defined in Theorem~\ref{thm_main}. The core of $F$ is exactly $\F_{2^d}$ by Proposition~\ref{prop_core}, and by Theorem~\ref{thm:all_char_old_paper}, $\overline{F}$ is bijective on $\PN{2}(\F_{2^m})$. Then by Theorem~\ref{thm:unif}, $F$ has differential uniformity $2^d$. 
\end{proof}
\begin{remark}\label{rem:mdeven}\begin{itemize}
	\item Generally, Theorem~\ref{thm:unif} can be used to produce APN functions on $\F_{2}^{m\ell}$ as long as one can find $F \in \SQH(\ell,2^m)$ such that $\overline{F}$ is bijective on $\PN{\ell-1}(\F_{2^m})$. Finding and classifying such functions is thus particularly interesting. The case $\ell=2$ was completely classified by the first-named author~\cite{Gologlu22} as well as Ding-Zieve~\cite{DingZieve}. Theorem~\ref{thm:all_char_old_paper} gives an explicit infinite family for $\ell=3$ (which yields our family of APN functions in Theorem~\ref{thm_main} if $m/d$ is even). Finding further examples or classifications of such bijections of the projective space is an interesting research question.
	\item If $m/d$ is even, Theorem~\ref{thm_main} and Theorem~\ref{thm:imageset} show that the functions $F$ we are considering are (if $m/d$ is even) exactly $2^d$-uniform and $(2^d+1)$-to-$1$. Such functions were for instance studied in~\cite[Theorem 7]{kolsch2023image}, where the Walsh spectra of these functions are explicitly determined. 
	\end{itemize}
\end{remark}
\subsection{The case $m/d$ is odd} 

In this case, we need a completely new set of tools.

The polynomials 
\[
	a_n t^n + a_{n-1} t^{n-1} + \cdots + a_1 t + a_0,
\]
in the indeterminate $t$ with coefficients in $\F_q$, 
with the usual addition rule of polynomials and with 
the multiplication rule $ta=a^\sigma t$ for field 
automorphism $\sigma \in \Aut(\F_q)$ form a 
ring $\F_q[t;\sigma]$ called the \textit{twisted polynomial ring}.

Note that for $\sigma=\id$, we recover the usual polynomial ring, 
but for the other choices of the field automorphism, the multiplication 
in the ring of twisted polynomials is non-commutative. 
We refer to~\cite[Chapter 1]{jacobson2009finite} for an introduction 
to the twisted polynomial ring (or more generally, \emph{skew-polynomial rings}). 

We recall some basic facts for twisted polynomial rings over finite fields.
It is clear from the construction that for any $P,Q \in \F_q[t;\sigma]$ 
we have $\deg{PQ} = \deg{P}+\deg{Q}$.
We say $P \in \F_q[t;\sigma]$ is a right divisor of $Q \in \F_q[t;\sigma]$ 
if there is a polynomial $R \in \F_q[t;\sigma]$ such that $Q=R\cdot P$, 
and $Q$ is in this case a left multiple of $P$. 
We call $P$ a \emph{linear} right divisor if $\deg(P)=1$. 
We define (linear) left divisors as well as right multiples 
similarly. For any two twisted polynomials $P_1,P_2\in \F_q[t;\sigma]$, there is a unique monic polynomial $L \in \F_q[t;\sigma]$ such that $L$ is a left multiple of both $P_1$ and $P_2$, and any other common left multiple of $P_1$ and $P_2$ is a left multiple of $L$. 
Such $L$ is called the least common left multiple of $P_1$ and $P_2$, and we write $L=\lclm(P_1,P_2)$.
he degree of the least common left multiple satisfies 
$\deg \lclm(P_1,P_2) = \deg P_1 + \deg P_2 - \deg \gcrd(P_1,P_2)$
where the greatest common right divisor ($\gcrd$) is analogously defined.
The ring $\F_q[t;\sigma]$ for $\sigma \ne \id$ is not a unique 
factorization domain, however there are left and right Euclidean algorithms.
Thus, given $P,R \in \F_q[t;\sigma]$, there exist unique
$Q,S \in \F_q[t;\sigma]$ such that $P = Q \cdot R+S$, where $\deg(S)<\deg(R)$.
For the proofs we refer to \cite[Chapter 1]{jacobson2009finite}.

The following result by Ore establishes the connection of certain 
twisted polynomials with the family in Theorem~\ref{thm_main}.

\begin{theorem}{\cite[Theorem 3]{ore1933special}} \label{thm:ore}
The polynomial 
$P(X)=aX^{\sigma^2+\sigma+1}+bX^{\sigma+1}+cX+1 \in\F_q[X]$ 
has no roots in $\F_q$ 
if and only if 
$F(t)=at^3+bt^2+ct+1 \in \F_q[t;\sigma]$ has no linear right divisor.
\end{theorem}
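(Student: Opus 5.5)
The plan is to prove Ore's criterion directly, by identifying right division by a linear twisted polynomial with evaluation of an associated ordinary polynomial. First I would normalize. A linear right divisor of $F(t)=at^3+bt^2+ct+1$ can always be taken monic, of the form $t-\beta$ with $\beta\in\F_q$. Since $F$ has nonzero constant term, any right divisor $t-\beta$ must have $\beta\neq 0$: if $\beta=0$, then $t$ right-divides $F$, which forces the constant term of $F$ to vanish.

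Next I would use the right Euclidean algorithm to write $F=Q\cdot(t-\beta)+r$ with $r\in\F_q$. The remainder can be computed explicitly by the standard ``twisted remainder'' formula. The key observation is that $t^n \equiv N_n(\beta)$ modulo right division by $t-\beta$, where
\[
N_0(\beta)=1,\qquad N_n(\beta)=\beta^{\sigma^{n-1}}\cdots\beta^{\sigma}\beta .
\]
I would prove this by induction. Suppose $t^n = Q_n(t-\beta)+N_n$. Then
\[
t^{n+1}=tQ_n(t-\beta)+tN_n=tQ_n(t-\beta)+N_n^{\sigma}t ,
\]
and $N_n^\sigma t = N_n^\sigma(t-\beta)+N_n^\sigma\beta$, with $N_n^\sigma\beta=N_{n+1}$. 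Left multiplication by a coefficient commutes with this reduction, so the remainder of $F$ is
\[
r(\beta)=a\beta^{\sigma^2+\sigma+1}+b\beta^{\sigma+1}+c\beta+1=P(\beta).
\]

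Therefore $t-\beta$ right-divides $F$ if and only if $P(\beta)=0$. Combining this with $\beta\neq0$, and noting that $P(0)=1\neq0$ so $0$ is never a root of $P$, the roots of $P$ in $\F_q$ correspond bijectively to the monic linear right divisors of $F$. This gives both directions of the equivalence.

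The main obstacle is keeping the noncommutative conventions straight: the relation $ta=a^\sigma t$, the side on which the quotient multiplies, and the fact that the linear factor sits on the right. I would carry out the induction step carefully, checking that the exponent produced is $\sigma^{n-1}+\dots+\sigma+1$ and not some other ordering. The ordering matters for matching $P$ exactly rather than a variant such as $x^{\sigma^2+\sigma+1}+cx^{\sigma^2+\sigma}+\cdots$.
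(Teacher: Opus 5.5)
Your proof is correct. It is a different route only in the sense that the paper gives no proof of this statement: it imports it from Ore \cite{ore1933special}, where it is stated for twisted polynomials of arbitrary degree. Your remainder computation therefore serves as a self-contained replacement for the citation, and every step checks out:
\begin{itemize}
\item The induction $t^{n}=Q_n(t-\beta)+N_n(\beta)$ is run on the correct side. You multiply by $t$ on the left, which is what right division requires, and this yields $N_n(\beta)=\beta^{\sigma^{n-1}+\cdots+\sigma+1}$.
\item Left multiplication by the coefficients $a,b,c$ preserves the shape $Q(t-\beta)+r$.
\item Uniqueness of the right remainder gives the converse direction. It follows from $\deg(RS)=\deg R+\deg S$.
\end{itemize}

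Compared with citing Ore, your argument gives three things at no extra cost.
\begin{itemize}
\item The same induction yields Ore's general statement: $t-\beta$ right-divides $\sum_i a_it^i$ if and only if $\sum_i a_i\beta^{(\sigma^i-1)/(\sigma-1)}=0$.
\item It visibly works over any finite field with any automorphism. This is how the paper actually uses the theorem in Corollary~\ref{cor:roots}, over $\F_{q^2}$ with the same $\sigma$.
\item It makes explicit that the exponent pattern attached to $a,b,c$ depends on the convention $ta=a^\sigma t$ and on dividing on the right, which is the point you flagged. Dividing on the left gives a genuinely different polynomial: the left remainder of $F$ by $t-\beta$ vanishes if and only if $Y=\beta^{\sigma}$ is a root of $aY^{\sigma^2+\sigma+1}+b^{\sigma}Y^{\sigma^2+\sigma}+c^{\sigma^2}Y^{\sigma^2}+1$.
\end{itemize}

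Two cosmetic remarks. Your separate argument that $\beta\neq 0$ is redundant, because your remainder formula already gives remainder $P(0)=1$ at $\beta=0$. The reduction to monic divisors is just $Q(\gamma t+\delta)=(Q\gamma)(t+\gamma^{-1}\delta)$.
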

The result of Ore is for arbitrary degree $F$. Here,
we recall just degree three which we use.
We also have the following result which links the left and right 
divisors also holds for arbitrary degrees. 

\begin{proposition}{\cite[Lemma B.2]{comm_paper}} \label{prop:divisor}
Let $P \in \F_q[t;\sigma]$ be a polynomial. 
Then $P$ has a linear right divisor if and only if 
     $P$ has a linear left divisor.
\end{proposition}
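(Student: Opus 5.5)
The plan is to translate both conditions into statements about the cyclic left module $M = R/RP$ over $R=\F_q[t;\sigma]$, and then use the structure of $R$ over its center. Let $r$ be the order of $\sigma$ and $\K$ its fixed field, so $[\F_q:\K]=r$. The center of $R$ is $Z=\K[t^r]$.

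First, the case $t$ dividing $P$. If the constant coefficient of $P$ is $0$, then $P=Q\,t$ for some $Q$. Moreover $t$ is then also a left divisor, because $Q\,t = t\,Q'$ where $Q'$ is obtained by applying $\sigma^{-1}$ to the coefficients of $Q$. So we may assume $P(0)\neq 0$ and $\deg P\ge 1$.

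Next, the dictionary between divisors and submodules or quotients of $M$:
\begin{itemize}
\item A factorisation $P=Q\,L$ with $\deg L=1$ gives a surjection $R/RP\to R/RL$. Thus $M$ has a quotient module of $\F_q$-dimension $1$. Conversely, a $1$-dimensional quotient of the cyclic module $M$ is of the form $R/I$ with $I\supseteq RP$ a left ideal. Every left ideal is principal, $I=RL$, and $\deg L=1$ with $L$ a right divisor of $P$.
\item A factorisation $P=L'Q$ with $\deg L'=1$ gives the submodule $RQ/RP\cong R/RL'$, where the isomorphism is induced by $x\mapsto xQ$. This uses that $R$ has no zero divisors. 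Conversely, a $1$-dimensional submodule $N\le M$ has quotient $M/N\cong R/RQ$ with $\deg Q=\deg P-1$. Then $P\in RQ$, so $P=L'Q$ with $\deg L'=1$.
\end{itemize}
So the claim becomes: $M$ has a $1$-dimensional quotient if and only if it has a $1$-dimensional submodule. Here $1$-dimensional modules are automatically simple.

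Now the central decomposition. Since $P$ right-divides a nonzero central element (its bound $f^*\in Z$), $M$ is a finite $Z$-torsion module. Hence $M=\bigoplus_g M_g$ over the monic irreducible $g\in Z$, where each $M_g$ is an $R$-submodule and also a direct summand (quotient) of $M$.

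Next, identify the $1$-dimensional modules. A $1$-dimensional module is $R/RL$ with $L=t-u$, $u\ne 0$. Its central annihilator is $t^r-N(u)$, where $N(u)=u^{\sigma^{r-1}+\cdots+\sigma+1}\in\K^\times$ is the norm. Conversely, for $\beta\in\K^\times$ the algebra $A_\beta=R/R(t^r-\beta)$ is a cyclic algebra of dimension $r^2$ over $\K$. By Wedderburn's theorem on finite division rings it is split, $A_\beta\cong M_r(\K)$. Its unique simple module has $\K$-dimension $r$, i.e.\ $\F_q$-dimension $1$.

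Finally, conclude. A module $M_g$ with $g=t^r-\beta$, $\beta\ne0$, is nonzero of finite length. Therefore it has a simple quotient and a simple submodule. Any simple module annihilated by a power of $g$ is annihilated by $g$ itself, since $gS$ is a submodule of the simple module $S$ and $g$ is nilpotent on $S$. Hence both of these simple modules are the unique simple $A_\beta$-module, of dimension $1$. Conversely, a $1$-dimensional quotient or submodule of $M$ is killed by some $t^r-\beta$, so it forces $M_{t^r-\beta}\neq 0$. Both conditions are therefore equivalent to "$M_{t^r-\beta}\ne 0$ for some $\beta\in\K^\times$". Under the reduction $P(0)\ne 0$, the prime $g=t$ cannot occur, because $t$ acts invertibly on $M$.

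I expect the main obstacle to be the step that identifies the simple modules annihilated by $t^r-\beta$. This requires the splitting $A_\beta\cong M_r(\K)$, and the argument that simple modules annihilated by a power of a central irreducible $g$ are already annihilated by $g$.
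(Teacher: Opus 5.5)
The paper does not prove this proposition; it quotes it from \cite[Lemma B.2]{comm_paper}. So there is no argument in the present text to compare yours with, and your proof has to stand on its own. It does, and it is correct in full generality (any degree, any $\sigma$).

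The individual steps check out:
\begin{enumerate}
\item Linear right (resp.\ left) divisors correspond exactly to one-dimensional quotients (resp.\ submodules) of $M=R/RP$. This includes your use of the absence of zero divisors to get $RQ/RP\cong R/RL'$.
\item The primary decomposition over the central PID $\K[t^r]$ is legitimate. Centrality makes each $M_g$ an $R$-submodule and a direct summand.
\item A simple module killed by a power of a central $g$ is killed by $g$, as you argue.
\end{enumerate}
The key point is that $M_{t^r-\beta}$ is at once a submodule and a quotient of $M$. Hence both conditions reduce to the left/right-symmetric statement that $t^r$ has an eigenvalue in $\K^\times$ on $M$.

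Two small repairs are needed:
\begin{enumerate}
\item The central prime to exclude is $t^r$, not $t$, since $t$ is not central for $r>1$. It is excluded because $t$, and hence $t^r$, acts bijectively on $M$ when $P(0)\neq 0$.
\item Before invoking Wedderburn you should say that $A_\beta$ is \emph{central simple} over $\K$. This is the standard fact for the cyclic algebra $(\F_q/\K,\sigma,\beta)$. Only then does the triviality of the Brauer group of a finite field give $A_\beta\cong M_r(\K)$.
\end{enumerate}

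You can avoid cyclic algebras altogether. Let $S$ be simple with $t^r$ acting as $\beta\in\K^\times$. Choose $\mu\in\F_q$ with $N(\mu)=\beta$, which is possible since the norm $\F_q^\times\to\K^\times$ is onto. Then $\mu^{-1}t$ acts on $S$ as a $\sigma$-semilinear map $T'$ with $(T')^r=\id$. By Hilbert 90 (Speiser's lemma), $T'$ has a nonzero fixed vector $v$, so $tv=\mu v$ and $S=\F_q v$.

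In this language the whole proof becomes linear algebra. Let $T$ be the semilinear action of $t$ on $M$.
\begin{enumerate}
\item Linear left divisors are the $T$-stable lines.
\item Linear right divisors are the $T$-stable hyperplanes. These are the stable lines of the $\sigma^{-1}$-semilinear adjoint $\phi\mapsto\sigma^{-1}\circ\phi\circ T$ on $M^*$, whose $r$-th power is the transpose of $T^r$.
\item Both kinds exist if and only if $T^r$ has an eigenvalue in $\K^\times$.
\end{enumerate}
Your module-theoretic version gets the left/right symmetry without dualising, because the primary component is both a sub and a quotient. The matrix version is more explicit and needs only norm surjectivity and Hilbert 90.
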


Theorem~\ref{thm:ore} in particular establishes a connection between degree $3$ twisted polynomials and the polynomials appearing in Theorem~\ref{thm_main}. We thus further investigate degree $3$ twisted polynomials.

\begin{proposition} \label{prop:lift}
Let $P \in \F_q[t;\sigma]$ with $\deg(P)=3$ such that $P$ has no 
linear right divisors. Then $P$ has also no linear right divisors 
in $\F_{q^2}[t;\sigma]$.
\end{proposition}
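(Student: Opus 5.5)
Suppose, for contradiction, that $P$ has a linear right divisor in $\F_{q^2}[t;\sigma]$, where $\sigma$ acts on $\F_{q^2}$ by the same power map $x\mapsto x^{p^k}$. Dividing by its leading coefficient, we may take the divisor monic, so $P = Q\cdot(t-\alpha)$ with $\alpha\in\F_{q^2}$ and $Q \in \F_{q^2}[t;\sigma]$ of degree $2$.

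The plan is a Galois-descent argument using the Frobenius $\tau\colon x\mapsto x^q$ of $\F_{q^2}/\F_q$. Since $\tau$ and $\sigma$ are both powers of the absolute Frobenius, they commute. Hence applying $\tau$ coefficientwise, $\sum a_i t^i \mapsto \sum a_i^\tau t^i$, is a ring automorphism of $\F_{q^2}[t;\sigma]$; indeed the relation $ta=a^\sigma t$ is preserved because $(a^\sigma)^\tau=(a^\tau)^\sigma$. Its fixed ring is exactly $\F_q[t;\sigma]$. Applying $\tau$ to $P=Q(t-\alpha)$ and using $P^\tau=P$ gives $P=Q^\tau(t-\alpha^q)$, so $t-\alpha^q$ is also a right divisor of $P$.

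We now split into two cases.

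\emph{Case $\alpha\in\F_q$.} Right Euclidean division of $P$ by $t-\alpha$ can be carried out in $\F_q[t;\sigma]$. By uniqueness of quotient and remainder in $\F_{q^2}[t;\sigma]$, the remainder is $0$ and the quotient is $Q\in\F_q[t;\sigma]$. So $P$ has a linear right divisor over $\F_q$, a contradiction.

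\emph{Case $\alpha\neq\alpha^q$.} The divisors $t-\alpha$ and $t-\alpha^q$ are distinct monic of degree $1$, so their $\gcrd$ has degree $0$. Hence $L=\lclm(t-\alpha,t-\alpha^q)$ is monic of degree $2$, and $P$ is a left multiple of $L$. Applying $\tau$ permutes the two divisors, so $L^\tau$ is again a monic common left multiple of the same degree. By uniqueness of the $\lclm$ we get $L^\tau=L$, so $L\in\F_q[t;\sigma]$. Dividing $P$ on the right by $L$ inside $\F_q[t;\sigma]$ and again invoking uniqueness of division over $\F_{q^2}$ yields $P=R\cdot L$ with $R\in\F_q[t;\sigma]$ of degree $1$. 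Thus $P$ has a linear left divisor in $\F_q[t;\sigma]$. By Proposition~\ref{prop:divisor}, $P$ then has a linear right divisor in $\F_q[t;\sigma]$, contradicting the hypothesis.

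The main point needing care is the descent of the $\lclm$: we must check that coefficientwise $\tau$ really is a ring automorphism, and that it carries $\lclm$'s to $\lclm$'s. The latter follows since it preserves divisibility, degrees and monicity.

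This is also exactly where $\deg P=3$ is used. A degree-$2$ right factor defined over $\F_q$ leaves a cofactor of degree $1$, which is a linear left divisor, and Proposition~\ref{prop:divisor} converts it into a linear right divisor.
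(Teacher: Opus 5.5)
Your proof is correct and follows the same route as the paper. You conjugate by the $q$-Frobenius to get the second right divisor $t-\alpha^q$, pass to $\lclm(t-\alpha,t-\alpha^q)$ of degree $2$, show it lies in $\F_q[t;\sigma]$, and apply Proposition~\ref{prop:divisor} to the degree-$1$ cofactor. The only difference is that you get $L\in\F_q[t;\sigma]$ from $\tau$-invariance and uniqueness of the $\lclm$, whereas the paper computes the coefficients of $L$ explicitly and checks they are fixed by $x\mapsto x^q$. Your version of that step is cleaner and avoids the routine verification.
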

\begin{proof}
    Assume to the contrary that 
    \[P =P_2 \cdot (t-b)\]
    with $P_2 \in \F_{q^2}[t;\sigma]$ and $b\in\F_{q^2}\setminus \F_q$ (note that it is impossible that $b\in \F_q$ and $P_2 \notin \F_{q}[t;\sigma]$). Then also 
    \[P =P^{(q)}=P_2^{(q)} \cdot (t-b^q),\]
    where $P^{(q)}$ is the polynomial $P$ with every coefficient taken 
	to the $q$-th power. So $P$ is, in $\F_{q^2}[t;\sigma]$, a left multiple 
	of $(t-b)$ and $(t-b^q)$, so it is particular a left multiple of the 
	least common left multiple $R:=\lclm(t-b,t-b^q)$. We calculate $R$. 
	We have by the definition of the least common left multiple that
	$R$ is of degree two, and,
	\[R = (t-c_1)(t-b)=(t-c_2)(t-b^q)\]
	for some $c_1,c_2 \in \F_{q^2}$. Usual multiplication in the twisted polynomial ring yields
	\[R=t^2-(c_1+b^\sigma)t+c_1b=t^2-(c_2+(b^{q})^\sigma)t+c_2b^q.\]
	Comparing coefficients yields $c_1b=c_2b^q$ and $c_2+(b^q)^\sigma=c_1+b^\sigma$. Solving the second equation for $c_2$ and substituting in the first yields $c_1=b^q\frac{b^\sigma-(b^q)^\sigma}{b-b^q}$, so 
	$$R=t^2-\left(b^q\frac{b^\sigma-(b^q)^\sigma}{b-b^q}+b^q\right)t+b^{q+1}\frac{b^\sigma-(b^q)^\sigma}{b-b^q}=t^2-\left(\frac{b^{\sigma+1}-b^{q({\sigma+1})}}{b-b^q}\right)t+b^{q+1}\frac{b^\sigma-b^{\sigma q}}{b-b^q}.$$
	A routine check reveals that both the linear and the constant coefficients
	$d_1,d_0$ are 
	contained in $\F_q$ by simply computing $d_i^q-d_i=0$, so 
	$R \in \F_q[t;\sigma]$.
								
	Then $P = QR$ for some $Q \in \F_{q^2}[t;\sigma]$ with $\deg(Q)=1$. 
	Since $R \in \F_q[t;\sigma]$ we also have necessarily $Q \in \F_q[t;\sigma]$
	by the right Euclidean division algorithm.
	Thus, $P$ is left-divisible by a degree one polynomial in $\F_q[t;\sigma]$, 
	so it is also right-divisible by one by Proposition~\ref{prop:divisor}, 
	contradicting our assumption.
\end{proof}

\begin{corollary} \label{cor:roots}
    The polynomial $P(X)=aX^{\sigma^2+\sigma+1}+bX^{\sigma+1}+cX+1 \in\F_q[X]$ has no roots in $\F_q$ if and only if it has no roots in $\F_{q^2}$. 
\end{corollary}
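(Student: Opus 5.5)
The plan is to pass through Ore's correspondence twice, once over $\F_q$ and once over $\F_{q^2}$, and to use Proposition~\ref{prop:lift} as the bridge between the two. One direction is trivial: if $P$ has no roots in $\F_{q^2}$, then it has none in the subfield $\F_q$. Everything therefore rests on the converse. So assume $P$ has no roots in $\F_q$.

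First, I will apply Theorem~\ref{thm:ore} over $\F_q$. It tells us that the twisted polynomial $T(t)=at^3+bt^2+ct+1\in\F_q[t;\sigma]$ has no linear right divisor in $\F_q[t;\sigma]$. Here I need $\deg T=3$, i.e.\ $a\neq 0$, which is the standing assumption of Theorem~\ref{thm_main}. If $a=0$ one would instead use the lower-degree version of Ore's result; I will restrict to $a\ne0$ as in the context.

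Second, Proposition~\ref{prop:lift} shows that $T$ also has no linear right divisor in $\F_{q^2}[t;\sigma]$.

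Third, I will apply Theorem~\ref{thm:ore} again, now over $\F_{q^2}$. Ore's theorem holds over any finite field with $\sigma$ a power of the Frobenius. The point to check is that the same exponent $\sigma=2^k$ defines the twist in $\F_{q^2}[t;\sigma]$. This is consistent, because $x\mapsto x^{2^k}$ is an automorphism of $\F_{q^2}$ that restricts to $\sigma$ on $\F_q$; this is exactly the ring used in the proof of Proposition~\ref{prop:lift}. The associated ordinary polynomial over $\F_{q^2}$ is then literally the same $P(X)=aX^{\sigma^2+\sigma+1}+bX^{\sigma+1}+cX+1$, now regarded in $\F_{q^2}[X]$. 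Ore's theorem then gives that $P$ has no roots in $\F_{q^2}$.

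The only step needing care is this identification in the third step. The set of exponents and the coefficients $a,b,c$ do not change when we view everything over $\F_{q^2}$, so Ore's criterion applies verbatim with $q$ replaced by $q^2$. I expect no real obstacle beyond making that compatibility explicit.
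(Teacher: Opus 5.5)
Your proof is correct and follows the paper's route: Ore's theorem over $\F_q$, then Proposition~\ref{prop:lift}, then Ore's theorem over $\F_{q^2}$. The paper merely runs the chain in contrapositive form (a root in $\F_{q^2}$ gives a linear right divisor over $\F_{q^2}$, hence one over $\F_q$, hence a root in $\F_q$), and your remark that $a\neq 0$ is needed for Proposition~\ref{prop:lift} makes explicit an assumption the paper leaves implicit.
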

\begin{proof}
    Let $s(t)=at^3+bt^2+ct+1 \in \F_q[t;\sigma]$.  
    Now assume $P$ has a root in $\F_{q^2}$. By Theorem~\ref{thm:ore}, $s$ has then a linear right divisor in $\F_{q^2}$, so by Proposition~\ref{prop:lift} it has a linear right divisor in $\F_q$, so $s$ has  (again by Theorem~\ref{thm:ore}) a root in $\F_q$. The converse is trivial.
\end{proof}

    We are now ready to prove Theorem~\ref{thm_main} for the case $m/d$ odd. The idea is to view $F \in \SQH(3,q)$ in a suitable extension field where we can then reduce the proof to the $m/d$ is even case we already proved.

\begin{proof}[Proof of Theorem~\ref{thm_main} if $m/d$ is odd.]
Let $q=2^m$, $F\in \SQH(3,q)$ be defined as in the Theorem~\ref{thm_main} with companion automorphism $\sigma=2^k$, $d=\gcd(k,m)$, $m/d$ is odd such that Eq.~\eqref{eq:condition_main} has no roots in $\F_q$.
    Let $k=2^\ell\cdot k'$, where $k'$ is odd, $\ell \geq 0$. 
    Consider $G\in \SQH(3,q^{2^{\ell+1}})$ defined exactly as in Theorem~\ref{thm_main}, using the same coefficients 
	$a,b,c \in \F_q \subseteq \F_{q^{2^{\ell+1}}}$ and the same field 
	automorphism $\sigma=2^k$. We can view $\F_{q}^3$ embedded naturally 
	in $\F_{q^{2^{\ell+1}}}^3$ and under this embedding, the restriction 
	of $G$ to $\F_{q}^3$ is exactly $F$. By repeated applications of 
	Corollary~\ref{cor:roots}, Eq.~\ref{eq:condition_main} has no roots in $\F_{q^{2^{\ell+1}}}$, so $G$ permutes $\P^2(\F_{q^{2^{\ell+1}}})$ by Theorem~\ref{thm:all_char_old_paper}. 
	Let $D = \gcd(2^{\ell+1}m, k)$. By the definition of $\ell$, the ratio $(2^{\ell+1}m)/D$ is even.  
	So by Theorem~\ref{thm:unif}, $G$ is differentially $2^D$ uniform,
	that is to say $|\ker (L_{G,\U y})| = 2^D$, where
	\[
		L_{G,\U y}(\U x)=G(\U x+\U y)+G(\U x)+G(\U y),
	\]
	for a fixed $\U y \in \F_{q^{2^{\ell+1}}}^3\setminus \{\U 0 \}$.
    
	We can explicitly determine $\ker (L_{G,\U y})$ by observing 
	(using Proposition \ref{prop_core}) that the core of $G$ is
	$\F_{2^D}$. 
	Indeed, $\U y \in \ker(L_{G,\U y})$ (since we work in characteristic $2$)
	and then 
	$\ker(L_{G,\U y}) = \U y \, \F_{2^D}$ 
	(since $L_{G,\U y}$ is $\F_{2^D}$-linear). 

    Consider now $L_{F,\U y}(\U x)=F(\U x+\U y)+F(\U x)+F(\U y)$ for any 
	fixed $\U y \in \F_q^3 \setminus \{\U 0 \}$. 
	We have $\ker(L_{F,\U y}) = \ker(L_{G,\U y}) \cap \F_q^3$.
    Because $\gcd(D, m) = \gcd(2^{\ell+1}m, k, m) = \gcd(k,m) = d$, we have $\F_{2^D}\cap \F_{q}=\F_{2^d}$. Thus, 
	$\ker(L_{G,\U y})\cap \F_q^3= \U y \, \F_{2^d}$. In particular, 
	$|\ker(L_{F,\U y})| = 2^d$, 
	and thus $F$ has differential uniformity $2^d$.
\end{proof}
    
\section{Comparison to existing trivariate APN functions} \label{sec:comparison}
Two families of APN permutations were found by Li and Kaleyski~\cite{li2023two}. One of these was recently extended by Bartoli and Stanica, who also gave one more family of trivariate APN functions~\cite{bartoli2026infinite}. We will now show that all of these are very special cases of our family of functions we presented in Theorem~\ref{thm_main}. We want to note that the proofs of the theorems in~\cite{li2023two} and~\cite{bartoli2026infinite} are spanning multiple pages of complicated case distinctions each, and some are computer assisted in the sense that computer algebra systems are used to compute complicated resultants. Others use deep tools from algebraic geometry, like the Hasse-Weil bounds. Our results are computer free, much shorter and purely elementary. We thus give a simple and unified explanation for all these functions. We also want to emphasize that the approach using resultants led to a number of unnecessary additional conditions in~\cite{li2023two}. They also spend multiple pages on proving the image set size of these functions in certain cases, we can recover and generalize these results easily with Theorem~\ref{thm:sqh} (whose proof only spans a few lines).

\begin{theorem}[{\cite[Theorem 1]{li2023two}}] \label{thm:li1}
    Let $q=2^m$, $\gcd(k,m)=d$, $\sigma=2^k$ and define the following function on $\F_q^3$:$$F(x,y,z)=(x^{\sigma+1}+x^{\sigma}z+yz^{\sigma},x^{\sigma}z+y^{\sigma +1},xy^{\sigma}+y^{\sigma}z+z^{\sigma+1}).$$
    Assume that the polynomials $X^{\sigma^2+\sigma+1}+X+1$, $X^{\sigma^2+\sigma+1}+X^{\sigma^2}+1$ and $X^{\sigma^2+\sigma+1}+X^{\sigma^2+1}+X^{\sigma+1}+X+1$ have no roots in $\F_{2^m}$ and $X^{\sigma^2+\sigma+1}+XY^{\sigma^2+\sigma}+XY^{\sigma}+X^{\sigma^2+\sigma}+X^{\sigma}Y^{\sigma^2}+X^{\sigma^2}Y+Y^{\sigma^2+\sigma+1}+Y^{\sigma^2+\sigma}+Y^{\sigma^2}+Y^{\sigma}+1$ has no solution in $\F_{2^m}^2$. Then the differential uniformity of $F$ is $2^d$.
\end{theorem}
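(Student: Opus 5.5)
The plan is to show that the function $F$ of Theorem~\ref{thm:li1} is linear-equivalent to a member of the family of Theorem~\ref{thm_main} for a specific choice of $(a,b,c)$. Equivalence will have to mean, concretely, an invertible $\F_q$-linear change of the input variables together with an invertible $\F_q$-linear recombination of the three output coordinates. Linear equivalence preserves differential uniformity, so the claim then follows from Theorem~\ref{thm_main}, provided one of the assumed root-free conditions is exactly Eq.~\eqref{eq:condition_main} for that $(a,b,c)$. This would also show that the remaining hypotheses of Theorem~\ref{thm:li1} are superfluous.

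\textbf{Step 1: finding the equivalence.} I will first match the monomial supports. Apart from the pure $(\sigma+1)$-powers, the cross terms of Li--Kaleyski's $F$ are $x^\sigma z$, $y^\sigma x$, $y^\sigma z$ and $z^\sigma y$. Here the variable $y$ carries the $\sigma$-power twice. In the family of Theorem~\ref{thm_main} with $b=0$, the cross terms are $y^\sigma z$, $x^\sigma z$, $x^\sigma y$ and $z^\sigma x$, and there $x$ carries the $\sigma$-power twice. This suggests the variable swap $x\leftrightarrow y$, with $z$ fixed. After the swap, Li--Kaleyski's components become
\begin{align*}
G_1 &= Y^{\sigma+1}+Y^\sigma Z+XZ^\sigma,\\
G_2 &= X^{\sigma+1}+Y^\sigma Z,\\
G_3 &= X^\sigma Y+X^\sigma Z+Z^{\sigma+1}.
\end{align*}
Taking $a=1$, $b=0$, $c=1$ one checks that $F_1+F_3=G_2+G_3$, which is encouraging. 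However, $G_1$ is not yet in the output span: it differs from $F_2$ by the terms $Y^\sigma Z+X^\sigma Y$.

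So the main computational step is to refine the input transformation. I would first try substitutions of the form $Z\mapsto Z+\alpha X$ or $Y\mapsto Y+\beta Z$ (with $\alpha,\beta\in\F_2$), and scalings that exploit the automorphism \eqref{eq:aut}. I would also allow $b\neq 0$ and a general $a\neq 0$. The goal is to solve the linear system that makes $(G_1,G_2,G_3)$ an invertible combination of $(F_1,F_2,F_3)$. Because the monomials $u^{\sigma}v$ are $\F_q$-linearly independent, this reduces to comparing coefficients. I expect this search, and pinning down the right $(a,b,c)$, to be the main obstacle. It is purely a finite computation, though.

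\textbf{Step 2: matching the conditions.} Once $(a,b,c)$ is determined, I will write out Eq.~\eqref{eq:condition_main}. If necessary I will pass to the equivalent reciprocal form $x^{\sigma^2+\sigma+1}+cx^{\sigma^2+\sigma}+bx^{\sigma^2}+a$ from \cite[Lemma B.4]{comm_paper}. I then check that it coincides with one of the three univariate polynomials assumed root-free in Theorem~\ref{thm:li1}. For instance, $(a,b,c)=(1,0,1)$ gives $X^{\sigma^2+\sigma+1}+X+1$, which is the first one listed. Since that polynomial has no roots in $\F_q$ by hypothesis, Theorem~\ref{thm_main} applies and gives differential uniformity $2^d$, with $d=\gcd(k,m)$. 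Transferring this back through the linear equivalence of Step~1 completes the argument.
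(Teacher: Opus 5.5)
Your strategy is the paper's: show that $F$ is linearly equivalent to a member of the family of Theorem~\ref{thm_main}, then invoke that theorem. The gap is that you never produce the equivalence, and that is the entire content of the proof. Moreover, the concrete guess you make cannot work.

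Your support heuristic only considered members with $b=0$. The swap $x\leftrightarrow y$ it suggests fails for \emph{every} $(a,b,c)$ and every rescaling of the variables. Any $\F_q$-combination of $F_1,F_2,F_3$ with no $x^{\sigma+1}$ and no $z^{\sigma+1}$ term must be a multiple of $F_2$. But your $G_1$ contains $Y^\sigma Z$, and that monomial never occurs in $F_2=ay^{\sigma+1}+z^\sigma x+bz^\sigma y+cx^\sigma y$. So Step~1 is left as an unperformed search. The example in Step~2 then ties the argument to the wrong parameters and hence to the wrong hypothesis, $X^{\sigma^2+\sigma+1}+X+1$.

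The correct match uses $b\neq 0$ and $c=0$. For $(a,b,c)=(1,1,0)$ the cross terms of Theorem~\ref{thm_main} are $y^\sigma z$, $z^\sigma x$, $z^\sigma y$ and $x^\sigma y$, the last occurring twice (in $F_1$ and $F_3$). So there $z$ carries the $\sigma$-power twice. In the Li--Kaleyski function it is $y$ that carries the $\sigma$-power twice, and the doubled term is $x^\sigma z$. Hence the right swap is $y\leftrightarrow z$. It turns the Li--Kaleyski coordinates into
\[
x^{\sigma+1}+x^\sigma y+y^\sigma z,\qquad x^\sigma y+z^{\sigma+1},\qquad xz^\sigma+yz^\sigma+y^{\sigma+1}.
\]
Exchanging the last two output coordinates gives exactly $(F_1,F_2,F_3)$ with $a=b=1$, $c=0$. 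No further substitution or scaling is needed; this is Proposition~\ref{prop:LK}.

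Eq.~\eqref{eq:condition_main} then reads $X^{\sigma^2+\sigma+1}+X^{\sigma+1}+1=0$. Replace $X$ by $X^{-1}$, noting that $0$ is a root of neither polynomial. This shows it has a root in $\F_q$ if and only if $X^{\sigma^2+\sigma+1}+X^{\sigma^2}+1$ does, which is the \emph{second} polynomial in the hypotheses of Theorem~\ref{thm:li1}. Theorem~\ref{thm_main} with $a=1\neq 0$ then gives differential uniformity $2^d$. The remaining hypotheses of Theorem~\ref{thm:li1} are indeed superfluous.
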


We observe that following:
\begin{proposition} \label{prop:LK}
    All functions in Theorem~\ref{thm:li1} are linearly equivalent to a function from the family introduced in Theorem~\ref{thm_main} for the choices $a=b=1$, $c=0$. The conditions in Theorem~\ref{thm:li1} can be replaced by the single condition that $X^{\sigma^2+\sigma+1}+X^{\sigma^2}+1$ has no roots in $\F_{2^m}$.
\end{proposition}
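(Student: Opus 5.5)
The plan is to exhibit the linear equivalence explicitly: a coordinate permutation on the input side and one on the output side should carry the function of Theorem~\ref{thm:li1} onto the case $a=b=1$, $c=0$ of Theorem~\ref{thm_main}. With these choices the family of Theorem~\ref{thm_main} reads
\[
G(x,y,z)=\bigl(x^{\sigma+1}+y^{\sigma}z+x^{\sigma}y,\; y^{\sigma+1}+z^{\sigma}x+z^{\sigma}y,\; z^{\sigma+1}+x^{\sigma}y\bigr).
\]
To find the permutations, I will record, for each component, the set of ``mixed'' monomials $u^\sigma v$ as ordered pairs $(u,v)$, and match these pair-patterns between the two functions.

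For $G$ the mixed pairs are $(y,z),(x,y),(z,x),(z,y),(x,y)$. For the Li--Kaleyski function $F$ of Theorem~\ref{thm:li1} they are $(x,z),(z,y),(x,z),(y,x),(y,z)$. The repeated pair $(x,y)$ in $G$ and $(x,z)$ in $F$ suggests swapping $y\leftrightarrow z$ in the input of $G$, which gives
\[
G(x,z,y)=\bigl(x^{\sigma+1}+z^{\sigma}y+x^{\sigma}z,\; z^{\sigma+1}+y^{\sigma}x+y^{\sigma}z,\; y^{\sigma+1}+x^{\sigma}z\bigr).
\]
Comparing with $F$ componentwise:
\begin{itemize}
\item the first component of $F$ equals the first component of $G(x,z,y)$;
\item the second component of $F$, namely $x^\sigma z+y^{\sigma+1}$, equals the third component of $G(x,z,y)$;
\item the third component of $F$ equals the second component of $G(x,z,y)$.
\end{itemize}
Hence $F=\pi_2\circ G\circ\pi_1$, where $\pi_1$ swaps the last two input coordinates and $\pi_2$ swaps the last two output coordinates. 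Both are in $\GL(3,q)$, hence in $\GL(3m,2)$, so $F$ and $G$ are linearly equivalent. This is a direct check with no hidden difficulty.

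For the second claim, Theorem~\ref{thm_main} with $a=b=1$, $c=0$ (and $a\neq0$ holds) shows that $G$, and hence $F$, has differential uniformity $2^d$ provided that $X^{\sigma^2+\sigma+1}+X^{\sigma+1}+1$ has no root in $\F_q$. By the remark after Theorem~\ref{thm:all_char_old_paper}, citing \cite[Lemma B.4]{comm_paper}, the condition $aX^{\sigma^2+\sigma+1}+bX^{\sigma+1}+cX+1=0$ having no root is equivalent to $X^{\sigma^2+\sigma+1}+cX^{\sigma^2+\sigma}+bX^{\sigma^2}+a=0$ having no root. For $a=b=1$, $c=0$ the latter is exactly the statement that $X^{\sigma^2+\sigma+1}+X^{\sigma^2}+1$ has no root in $\F_{2^m}$.

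This is one of the hypotheses already imposed in Theorem~\ref{thm:li1}. So the remaining conditions there (the other two univariate polynomials and the bivariate one) are superfluous, and that single condition suffices. The only step requiring care is applying \cite[Lemma B.4]{comm_paper} with the coefficients in the correct positions; if in doubt, I would re-derive it directly via the substitution $X\mapsto 1/X$ followed by a $\sigma$-power twist.
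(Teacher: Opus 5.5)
Your proof is correct and follows the paper's own route. Swapping $y\leftrightarrow z$ in the input and the second and third output coordinates carries the Li--Kaleyski function onto the case $a=b=1$, $c=0$ of Theorem~\ref{thm_main}. Your explicit treatment of the reduced condition, which the paper leaves implicit, is also fine. In fact the substitution $X\mapsto 1/X$ alone already turns $X^{\sigma^2+\sigma+1}+X^{\sigma+1}+1$ into $X^{\sigma^2+\sigma+1}+X^{\sigma^2}+1$, and neither polynomial has $0$ as a root, so no $\sigma$-power twist is needed.
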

\begin{proof}
    Take the function $F(x,y,z)=(x^{\sigma+1}+x^{\sigma}z+yz^{\sigma},x^{\sigma}z+y^{\sigma +1},xy^{\sigma}+y^{\sigma}z+z^{\sigma+1})$ from Theorem~\ref{thm:li1}. Changing the second and third component as well as renaming $y \leftrightarrow z$ yields exactly a function from the family introduced in Theorem~\ref{thm_main} for the choices $a=b=1$, $c=0$ as claimed.
\end{proof}

This family has very recently been generalized by Bartoli and Stanica to a family of the form 
\[F(x,y,z)=(x^{\sigma+1}+ax^\sigma z+yz^\sigma,x^\sigma z+y^{\sigma+1},xy^\sigma+ay^\sigma z+z^{\sigma+1}).\]
This family reduces to the original family found by Li and Kaleyski in the case $a=1$; and is contained in our family via linear equivalence from Theorem~\ref{thm_main} using the parameter choices $a=1$, $c=0$ in the same way described in Proposition~\ref{prop:LK}.

The second family found by Li and Kaleyski is:
\begin{theorem}[{\cite[Theorem 9]{li2023two}}] \label{thm:li2}
      Let $q=2^m$, $\gcd(k,m)=d$, $\sigma=2^k$ and define the following function on $\F_q^3$: $$F(x,y,z)=(x^{\sigma+1}+xy^{\sigma}+yz^{\sigma},xy^{\sigma}+y^{\sigma +1},x^{\sigma}z+y^{\sigma}z+y^{\sigma+1}).$$
    Assume that the polynomials  $X^{\sigma^2+\sigma+1}+X+1$, $X^{\sigma^2+\sigma+1}+X^{\sigma^2}+1$ and $X^{\sigma^2+\sigma+1}+X^{\sigma+1}+X^{\sigma}+X+1$ have no roots in $\F_{2^m}$ and $X^{\sigma^2+\sigma+1}+X^{\sigma+1}Y^{\sigma^2}+XY^{\sigma}+X^{\sigma}Y^{\sigma^2}+X^{\sigma^2}Y+X^{\sigma^2}+Y^{\sigma^2+\sigma+1}+Y^{\sigma^2+1}+Y^{\sigma^2+\sigma}+Y^{\sigma^2}+1$ has no solution in $\F_{2^m}^2$. Then the differential uniformity of $F$ is $2^d$.
\end{theorem}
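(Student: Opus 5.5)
The plan is to deduce Theorem~\ref{thm:li2} from Theorem~\ref{thm_main}, in the same way Proposition~\ref{prop:LK} handled Theorem~\ref{thm:li1}. I will look for $A,B\in\GL(3,2)$ such that $B\circ F\circ A$ is a member of the family in Theorem~\ref{thm_main}. Both sides only have coefficients in $\F_2$ and consist of monomials $u^{\sigma}v$ with $u,v$ among the variables. A substitution $A$ with $\F_2$-entries keeps us inside the space of $\sigma$-semiquadratic maps, and the effect of $A$ and $B$ can be tracked on the $3\times3$ coefficient matrices $C^{(i)}$ (with $F_i(\U x)=\U x\,C^{(i)}\,(\U x^{\sigma})^T$). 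So the whole problem is a finite linear-algebra computation over $\F_2$.

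The key structural observation is that the target family contains exactly the three diagonal monomials $x^{\sigma+1},y^{\sigma+1},z^{\sigma+1}$, one in each component. The given $F$ contains only $x^{\sigma+1}$ and $y^{\sigma+1}$ (the latter twice), and no $z^{\sigma+1}$. Hence a pure permutation of variables and components cannot suffice, unlike in Proposition~\ref{prop:LK}.

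First I will try to combine components. For instance, $F_2+F_3=xy^{\sigma}+x^{\sigma}z+y^{\sigma}z$ removes $y^{\sigma+1}$ from one component, so that $y^{\sigma+1}$ appears only once. Then I will try a substitution such as $z\mapsto z+x$ or $z\mapsto z+y$ to create a $z^{\sigma+1}$-like term. The guiding heuristic is to make each component carry exactly one diagonal term, and then to rename variables so that the cross terms $y^{\sigma}z$, $x^{\sigma}y$, $x^{\sigma}z$, $z^{\sigma}x$, $z^{\sigma}y$ land in the positions prescribed by Theorem~\ref{thm_main}. In particular, the third component must become exactly $z^{\sigma+1}+x^{\sigma}y$. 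Reading off the coefficients then gives $a,b,c\in\F_2$ with $a=1$, and I expect, as for Theorem~\ref{thm:li1}, the values $a=b=1$, $c=0$. This search for the explicit $A,B$ is the main obstacle. If the first attempts fail, I will set up the coefficient-matrix transformation rule in general and solve for $A$ by requiring the diagonal entries of the transformed $C^{(i)}$ to match.

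Once the equivalence is established, Eq.~\eqref{eq:condition_main} becomes a specific polynomial, for example $X^{\sigma^2+\sigma+1}+X^{\sigma+1}+1$ when $(a,b,c)=(1,1,0)$. Using the equivalence with the reversed form $X^{\sigma^2+\sigma+1}+cX^{\sigma^2+\sigma}+bX^{\sigma^2}+a$ (\cite[Lemma B.4]{comm_paper}), this is the statement that $X^{\sigma^2+\sigma+1}+X^{\sigma^2}+1$ has no roots in $\F_{2^m}$, which is one of the hypotheses listed in Theorem~\ref{thm:li2}. Theorem~\ref{thm_main} then gives differential uniformity $2^d$ for the equivalent function. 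Since linear equivalence preserves differential uniformity, the same holds for $F$. This proves the theorem and also shows that its remaining hypotheses are superfluous.
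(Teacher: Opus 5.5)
Your strategy of exhibiting a linear equivalence to a member of the family of Theorem~\ref{thm_main} and then checking that \eqref{eq:condition_main} is among the hypotheses is the paper's. However, the proposal stops exactly where the proof has content: you never produce $A,B$. For the function as printed, they cannot exist.

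Every coordinate of the printed $F$ vanishes when $x=y=0$, and $F(x,y,z+1)+F(x,y,z)=(y,\,0,\,x^{\sigma}+y^{\sigma})$. This difference is zero at all $2^m$ points $(0,0,z)$, so the printed $F$ has differential uniformity at least $2^m$. By contrast, every member of the family with $a\neq 0$ satisfying the no-root condition is $2^d$-uniform with $d<m$. Differential uniformity is invariant under linear equivalence, so no search over $\GL(3,2)$, or over any larger group, can succeed.

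In fact the statement as printed is false. For $m=3$, $k=1$ one checks directly that all of its hypotheses hold, yet $\Delta_F\ge 8>2$. You correctly observed that no $z^{\sigma+1}$ term occurs, but this is the symptom of a transcription error, not a reason to mix components. In Li--Kaleyski's function the second coordinate is $xy^{\sigma}+z^{\sigma+1}$; compare the case $a=1$ of the function in Theorem~\ref{thm:BS}.

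With that correction the equivalence is a pure permutation, which is the paper's argument. Swapping $x\leftrightarrow y$ turns $F$ into $(F_2,F_3,F_1)$, where $F_1,F_2,F_3$ are the components of the family with $a=c=1$, $b=0$; a cyclic shift of the components finishes. Note this is not $a=b=1$, $c=0$, which is the choice for Theorem~\ref{thm:li1}. Then \eqref{eq:condition_main} becomes $X^{\sigma^2+\sigma+1}+X+1$, which is the first listed hypothesis, and Theorem~\ref{thm_main} yields uniformity $2^d$.

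Your closing step is fine once the right equivalence and parameters are in place: invariance of differential uniformity under linear equivalence, and discarding the remaining hypotheses. But the proposal supplies neither the equivalence nor the parameters, and the guess $(a,b,c)=(1,1,0)$ is unsupported.
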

Again, we observe that following:
\begin{proposition}
    All functions in Theorem~\ref{thm:li2} are linearly equivalent to a function from the family introduced in Theorem~\ref{thm_main} for the choices $a=c=1$, $b=0$. The conditions in Theorem~\ref{thm:li2} can be replaced by the single condition that $X^{\sigma^2+\sigma+1}+X^{\sigma}+1$ has no roots in $\F_{2^m}$.
\end{proposition}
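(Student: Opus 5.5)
The plan mirrors the proof of Proposition~\ref{prop:LK}: exhibit an explicit linear equivalence, then translate the root condition.

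\textbf{Step 1 (the equivalence).} Writing $G$ for the function of Theorem~\ref{thm:li2}, I want $A,B\in\GL(3,q)$ with $A\circ G\circ B=F$, where $F$ is the function of Theorem~\ref{thm_main} with $a=c=1$, $b=0$:
\[
F=(x^{\sigma+1}+y^{\sigma}z+x^{\sigma}z,\; y^{\sigma+1}+z^{\sigma}x+x^{\sigma}y,\; z^{\sigma+1}+x^{\sigma}y).
\]
\begin{itemize}
\item Each output coordinate of $F$ contains a pure term $x^{\sigma+1}$, $y^{\sigma+1}$ or $z^{\sigma+1}$. In contrast, $G$ contains no $z^{\sigma+1}$ term, and $y^{\sigma+1}$ occurs in two coordinates. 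So a pure coordinate permutation plus a renaming of variables will not suffice, unlike in Proposition~\ref{prop:LK}.
\item I will first try output changes $A$ over $\F_2$. The obvious first move is to add the second coordinate of $G$ to the third, giving $x^{\sigma}z+y^{\sigma}z+xy^{\sigma}$.
\item I will then try input substitutions $B$ that are permutation matrices composed with elementary transvections such as $y\mapsto y+z$ or $x\mapsto x+y$. Since the entries are in $\F_2$, they commute with $\sigma$, so $(u+v)^{\sigma+1}=u^{\sigma+1}+u^{\sigma}v+uv^{\sigma}+v^{\sigma+1}$ produces the missing pure square term.
\item To locate the right substitution, I will compare the supports of the $\F_q$-span of the coordinates (the set of monomials $u^{\sigma}v$) and match the $\sigma$-semibilinear forms.
\item If no $\F_2$-matrix works, I will allow coefficients from $\F_{2^d}$, or additionally conjugate by a permutation of $\{x,y,z\}$ on the target family. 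Here I use that $\GL(3,q)$ acts on the left and right of $\SQH(3,q)$, so the parameters $(a,b,c)$ may move within an orbit; it is then enough to land on some member with $a=c=1$, $b=0$.
\end{itemize}
Finding this matrix pair is the main obstacle. The rest is bookkeeping.

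\textbf{Step 2 (the differential uniformity).} Once $G$ is linearly equivalent to $F$ with $a=c=1$, $b=0$, Theorem~\ref{thm_main} gives differential uniformity $2^d$ for $G$ under the single hypothesis that $X^{\sigma^2+\sigma+1}+X+1$ has no roots in $\F_q$, since linear equivalence preserves differential uniformity.

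\textbf{Step 3 (matching the stated condition).} It remains to show that this condition is equivalent to $X^{\sigma^2+\sigma+1}+X^{\sigma}+1$ having no roots in $\F_q$.
\begin{itemize}
\item By Theorem~\ref{thm:ore}, the first condition says exactly that $t^3+t+1\in\F_q[t;\sigma]$ has no linear right divisor.
\item By Proposition~\ref{prop:divisor}, this holds if and only if $t^3+t+1$ has no linear left divisor.
\item I will write out the left-division condition explicitly. Dividing $t^3+t+1$ on the left by $(t-\beta)$ gives a polynomial equation in $\beta$. After a substitution of the form $X=\beta^{\pm1}$ or $X=\beta^{\sigma^{-1}}$ and clearing denominators, I expect this to become $X^{\sigma^2+\sigma+1}+X^{\sigma}+1$.
\item This is the same mechanism as \cite[Lemma B.4]{comm_paper}, which relates the two normalizations of Eq.~\eqref{eq:condition_oldmain}. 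If the direct computation is messy, I will invoke that lemma with $a=c=1$, $b=0$ and then apply the above left/right switch.
\end{itemize}
Since Theorem~\ref{thm:li2} assumes in particular that $X^{\sigma^2+\sigma+1}+X+1$ has no roots, the single condition is implied by its hypotheses. The remaining hypotheses there are therefore superfluous, which proves the replacement claim.
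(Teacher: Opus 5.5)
\textbf{There are two genuine gaps: Step 1 is never carried out, and Step 3 aims at a false equivalence.}

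\textbf{Step 1.} This is where the whole first claim lives, and you leave it as an unresolved search. For the function exactly as printed in Theorem~\ref{thm:li2}, that search cannot succeed. Every coordinate of the printed map vanishes when $x=y=0$, so the map has at least $q$ zeros. It is not even APN, since $\ker L_{(0,0,1)}\supseteq\{(0,0,z):z\in\F_q\}$. By contrast, the member of Theorem~\ref{thm_main} with $a=c=1$, $b=0$ has $\U 0$ as its only zero once $X^{\sigma^2+\sigma+1}+X+1$ has no root (shown in the next paragraph). The number of zeros is invariant under $G\mapsto A\circ G\circ B$ with $A,B$ invertible.

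The anomaly you noticed (no $z^{\sigma+1}$ term, two copies of $y^{\sigma+1}$) is a misprint. The second coordinate of the Li--Kaleyski function is $xy^{\sigma}+z^{\sigma+1}$, as in Theorem~\ref{thm:BS} with $a=1$ (reading $q$ there as $\sigma$). With that correction the paper's argument is immediate and needs no transvections or $\F_{2^d}$-matrices. After $x\leftrightarrow y$ the coordinates become $y^{\sigma+1}+x^{\sigma}y+z^{\sigma}x=F_2$, $x^{\sigma}y+z^{\sigma+1}=F_3$ and $x^{\sigma+1}+y^{\sigma}z+x^{\sigma}z=F_1$. A cyclic shift of the components then yields $F$.

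\textbf{Step 3.} Carrying out your left division, the left remainder of $t^3+t+1$ modulo $t-\beta$ is $\beta^{1+\sigma^{-1}+\sigma^{-2}}+\beta+1$. After applying $\sigma^2$ this is $X^{\sigma^2+\sigma+1}+X^{\sigma^2}+1$. The inversion $X\mapsto X^{-1}$ (which is \cite[Lemma B.4]{comm_paper} at $a=c=1$, $b=0$) turns $X^{\sigma^2+\sigma+1}+X+1$ into $X^{\sigma^2+\sigma+1}+X^{\sigma^2+\sigma}+1$, and turns the left form into $X^{\sigma^2+\sigma+1}+X^{\sigma+1}+1$. The polynomial $X^{\sigma^2+\sigma+1}+X^{\sigma}+1$ is not among these, and it is genuinely inequivalent. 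For $\sigma=2$, $m=7$, the polynomial $X^7+X+1$ is irreducible over $\F_2$, so it has roots in $\F_{2^7}$. But $X^7+X^2+1=(X^2+X+1)(X^5+X^4+X^2+X+1)$ has its roots only in $\F_4$ and $\F_{2^5}$.

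This is not merely a failure of method. For $x=0$, $F_3$ and $F_2$ force $z=y=0$. For $x\neq 0$, scale to $x=1$; then $F_3=0$ forces $y=z^{\sigma+1}$, and $F(1,z^{\sigma+1},z)=(P(z),z^{\sigma}P(z),0)$ with $P(X)=X^{\sigma^2+\sigma+1}+X+1$. So a root $z_0$ makes $F$ vanish on the line $\F_q\U u$, where $\U u=(1,z_0^{\sigma+1},z_0)$. Hence $\F_q\U u\subseteq\ker L_{\U u}$ and the differential uniformity is at least $q$. Thus for $k=1$, $m=7$ the printed single condition holds, yet the function is not APN.

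The intended condition is $X^{\sigma^2+\sigma+1}+X^{\sigma^2+\sigma}+1$ (matching Theorem~\ref{thm:BS} at $a=1$), equivalently $X^{\sigma^2+\sigma+1}+X+1$. For that version your closing paragraph is exactly the right justification, and Step 3 becomes unnecessary. This condition is already among the hypotheses of Theorem~\ref{thm:li2}, and it is precisely the hypothesis of Theorem~\ref{thm_main} for $a=c=1$, $b=0$.
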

\begin{proof}
    Take the function $F(x,y,z)=(x^{\sigma+1}+xy^{\sigma}+yz^{\sigma},xy^{\sigma}+y^{\sigma +1},x^{\sigma}z+y^{\sigma}z+y^{\sigma+1})$ from Theorem~\ref{thm:li2}. A cyclic shift of the components and renaming $x \leftrightarrow y$ yields exactly a function from the family introduced in Theorem~\ref{thm_main} for the choices $a=c=1$, $b=0$ as claimed.
\end{proof}
As we explained previously, our family yields for instance APN functions on $\F_{2}^{3m}$ for any $m>1$, while that is not the case for the families found by Li and Kaleyski (see~\cite[Proposition 4]{li2023two}). This means that our family not only unifies the two families by Li and Kaleyski, but provably yields new examples outside of these families as well (even up to CCZ-equivalence). Given that our family allows a large set of possible parameters $a,b,c \in \F_q$, we expect that our family actually contains many examples that are inequivalent to known APN functions.

The last family of trivariate APN functions (recently found by Bartoli and Stanica) is:
\begin{theorem}\label{thm:BS}{\cite[Theorem 3.2.]{bartoli2026infinite}}
      Let $q=2^m$, $\gcd(k,m)=d$, $\sigma=2^k$ and define the following function on $\F_q^3$: 
      \[F(x,y,z)=(x^{q+1}+axy^q+yz^q,xy^q+z^{q+1},x^qz+y^{q+1}+ay^qz).\]
      Then $F$ is APN if and only if $X^{\sigma^2+\sigma+1}+aX^{\sigma^2+\sigma}+1$ has no roots in $\F_q$.
\end{theorem}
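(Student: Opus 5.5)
The plan is to show that the function of Theorem~\ref{thm:BS} is, after a coordinate permutation, a member of the family of Theorem~\ref{thm_main}, so that Theorem~\ref{thm_main} gives the ``if'' direction. The ``only if'' direction will then come from the converse part of Theorem~\ref{thm:all_char_old_paper}. Throughout I read the exponents $q$ in the displayed formula as $\sigma$, as the rest of the statement clearly intends. Since $\gcd(k,m)=d$ is fixed and Theorem~\ref{thm_main} yields uniformity $2^d$, ``APN'' should be read with $d=1$, or more generally as ``differential uniformity $2^d$''.

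\emph{Step 1 (linear equivalence).} Write the function as $(G_1,G_2,G_3)$ and swap the variables $x \leftrightarrow y$. Then:
\begin{itemize}
\item $G_2=xy^\sigma+z^{\sigma+1}$ becomes $z^{\sigma+1}+x^\sigma y$, which is $F_3$.
\item $G_3=x^\sigma z+y^{\sigma+1}+ay^\sigma z$ becomes $x^{\sigma+1}+y^\sigma z+ax^\sigma z$. This is $F_1$ with parameters $(a',b',c')=(1,0,a)$.
\item $G_1=x^{\sigma+1}+axy^\sigma+yz^\sigma$ becomes $y^{\sigma+1}+z^\sigma x+ax^\sigma y$. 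This is $F_2$ with the same parameters $(1,0,a)$.
\end{itemize}
Hence, after cyclically reordering the output coordinates, the function is linearly equivalent to the function of Theorem~\ref{thm_main} with $(a',b',c')=(1,0,a)$. Note that $a'=1\neq 0$, as Theorem~\ref{thm_main} requires.

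\emph{Step 2 (matching the conditions).} Eq.~\eqref{eq:condition_main} becomes $X^{\sigma^2+\sigma+1}+aX+1=0$. Zero is not a root of either this polynomial or the polynomial in Theorem~\ref{thm:BS}. Substituting $X=1/Y$ and multiplying by $Y^{\sigma^2+\sigma+1}$ turns it into $1+aY^{\sigma^2+\sigma}+Y^{\sigma^2+\sigma+1}$. So one polynomial has a root in $\F_q$ if and only if the other does. If there are no roots, Theorem~\ref{thm_main} gives differential uniformity $2^d$ (APN for $d=1$), and this property is invariant under linear equivalence.

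\emph{Step 3 (converse).} This is the only part not directly covered by Theorem~\ref{thm_main}. Suppose the polynomial has a root. By the ``only if'' part of Theorem~\ref{thm:all_char_old_paper} (valid in characteristic $2$, where the minus sign is a plus), the induced map $\overline{F}$ is not bijective on $\PN{2}(\F_q)$. I expect the main obstacle to be extracting a nonzero zero $\U x_0$ of $F$ from this failure of bijectivity. If the proof in \cite{comm_paper} shows that failure of bijectivity comes exactly from a nonzero zero of $F$, that settles it. Otherwise I would try to produce such a zero directly from a root of the polynomial, or argue via Theorem~\ref{thm:ore} from a linear right divisor of $t^3+at+1$.

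Given $F(\U x_0)=\U 0$ with $\U x_0\neq\U 0$, homogeneity gives $F(\lambda \U x_0)=\lambda^{\sigma+1}F(\U x_0)=\U 0$ for all $\lambda\in\F_q$. Consequently
\[
F((\lambda+1)\U x_0)+F(\lambda\U x_0)=\U 0 \quad\text{for all } \lambda\in\F_q .
\]
Thus the equation $F(\U x+\U x_0)+F(\U x)=\U 0$ has at least $q\geq 4$ solutions, and $F$ is not APN.
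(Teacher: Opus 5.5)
Your Steps 1 and 2 are exactly what the paper does. The Proposition following Theorem~\ref{thm:BS} swaps $x\leftrightarrow y$ and rotates the output coordinates, which lands on the family of Theorem~\ref{thm_main} with $a=1$, $b=0$ and $c$ equal to the Bartoli--Stanica parameter. Your substitution $X\mapsto 1/X$ correctly matches Eq.~\eqref{eq:condition_main} with the stated polynomial. Note that the paper uses this only to recover the ``if'' direction; it does not prove the converse.

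The gap is in your Step 3: the converse is left as a list of conditional plans, and the nonzero zero $\U x_0$ is never produced. The route through the ``only if'' part of Theorem~\ref{thm:all_char_old_paper} cannot finish the argument by itself. Failure of bijectivity of $\overline F$ might a priori come from two non-proportional points with proportional nonzero images. Such a collision $F(\U x)=F(\U y)$ only gives the two solutions $\U x,\U y$ of $F(\U v+\U u)+F(\U v)=\U 0$ with $\U u=\U x+\U y$, which is compatible with APN. What is missing is a direct computation. For parameters $(1,0,a)$, kill $F_3$ by setting $x=1$, $y=z^{\sigma+1}$; then
\[
F_1(1,z^{\sigma+1},z)=z^{\sigma^2+\sigma+1}+az+1=:P(z),\qquad F_2(1,z^{\sigma+1},z)=z^{\sigma}P(z).
\]
So a root $r\in\F_q$ of $P$ gives $F(1,r^{\sigma+1},r)=\U 0$. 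Equivalently, $(r^{\sigma+1},1,r)$ is a zero of the function in Theorem~\ref{thm:BS}. Your last paragraph then shows the differential uniformity is at least $q\ge 4$. The same substitution works for arbitrary $(a,b,c)$, giving $F_1=P(z)$ and $F_2=z^{\sigma}P(z)$ with $P$ the polynomial of Eq.~\eqref{eq:condition_main}. Hence a root always forces a nonzero zero of $F$, and no appeal to Theorem~\ref{thm:all_char_old_paper} or Theorem~\ref{thm:ore} is needed.
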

Again, we can identify this family as a special case of our family, both generalizing and simplifying proofs considerably.
\begin{proposition}
    The family in Theorem~\ref{thm:BS} is linearly equivalent to a function in the family from Theorem~\ref{thm_main} for $a=1$ and $b=0$.
\end{proposition}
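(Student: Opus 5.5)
The plan is to exhibit an explicit linear equivalence: a permutation of the variables on the input side combined with a permutation of the coordinate functions on the output side. I read the exponent $q$ in Theorem~\ref{thm:BS} as the companion automorphism $\sigma=2^k$; that is, $F(x,y,z)=(x^{\sigma+1}+axy^\sigma+yz^\sigma,\;xy^\sigma+z^{\sigma+1},\;x^\sigma z+y^{\sigma+1}+ay^\sigma z)$. I will match this against the family of Theorem~\ref{thm_main} with parameters $a'=1$, $b'=0$ and $c'$ still to be fixed:
\[
(X^{\sigma+1}+Y^\sigma Z+c'X^\sigma Z,\; Y^{\sigma+1}+Z^\sigma X+c'X^\sigma Y,\; Z^{\sigma+1}+X^\sigma Y).
\]

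First, use the component containing $z^{\sigma+1}$ to fix the substitution. The second component of $F$ is $z^{\sigma+1}+xy^\sigma$, and it should coincide with $Z^{\sigma+1}+X^\sigma Y$. This forces $Z=z$, $X=y$ and $Y=x$. Under this swap $x\leftrightarrow y$, the first target component becomes $y^{\sigma+1}+x^\sigma z+c'y^\sigma z$, which is the third component of $F$ once $c'=a$. The second target component becomes $x^{\sigma+1}+z^\sigma y+a\,y^\sigma x$, which is exactly the first component of $F$.

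Hence $F$ equals the member of the family of Theorem~\ref{thm_main} with $(a,b,c)=(1,0,a)$, precomposed with the variable swap $x\leftrightarrow y$ and postcomposed with a cyclic permutation of coordinates. Both maps are in $\GL(3,q)\le\GL(3m,2)$, so the equivalence is linear.

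It remains to check that the two existence conditions agree. Theorem~\ref{thm_main} with these parameters requires that $X^{\sigma^2+\sigma+1}+aX+1$ have no root in $\F_q$. For $X\neq 0$, substitute $X=1/Y$ and multiply through by $Y^{\sigma^2+\sigma+1}$; this gives $1+aY^{\sigma^2+\sigma}+Y^{\sigma^2+\sigma+1}$, the polynomial in Theorem~\ref{thm:BS}. Since $X=0$ and $Y=0$ are never roots, the two no-root conditions are equivalent; this is also an instance of \cite[Lemma B.4]{comm_paper}.

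The only real obstacle is bookkeeping: the exponent notation in the cited theorem, and which coordinate goes where. Once the substitution is pinned down by the $z^{\sigma+1}$ term, everything else is direct comparison of monomials.
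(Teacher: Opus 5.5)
Your proof is correct and is the same as the paper's: swap $x\leftrightarrow y$ and cyclically rotate the coordinates. This identifies the Bartoli--Stanica function with the member of the family of Theorem~\ref{thm_main} with $(a,b,c)=(1,0,a)$; the paper states the same substitution without writing out the monomial comparison. Your extra check that the two no-root conditions correspond under $X\mapsto 1/X$ is also correct, and the paper leaves it implicit.
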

\begin{proof}
    Take the function from Theorem~\ref{thm_main} and rename the variables $x \leftrightarrow y$. Rotating the components yields then exactly a special case of the family in Theorem~\ref{thm_main}.
\end{proof}

\section{Inequivalence to Gold APN functions}
Other known infinite families of APN functions that yield examples in $\F_2^{n}$ with $n=3k$ for any choice of $k$ are monomials. By a result of Yoshiara, the only monomials that are possibly CCZ-equivalent to the family of APN functions in Theorem~\ref{thm_main} are in the family of Gold functions $F(x)=x^{2^i+1}$ for $\gcd(i,n)=1$~\cite{yoshiara2016equivalences}. In this section, we prove that the APN functions in Theorem~\ref{thm_main} are inequivalent to the Gold APN functions. The proof follows the same lines as a similar odd-characteristic proof given by the authors in~\cite{comm_paper} by proving that the linear automorphism groups of the respective functions cannot be conjugate in the general linear group. We want to note that for none of the trivariate APN functions contained in our family (as described in Section~\ref{sec:comparison}) such a theoretical inequivalence result was provided.

We recall some facts and definitions needed. Firstly, a result of Yoshiara~\cite{yoshiara2012equivalences} together with an observation in~\cite{kaspers2021number} yields that the APN functions in Theorem~\ref{thm_main} are CCZ-equivalent to Gold APN functions if and only if they are linearly equivalent to them. 
\begin{definition}
    Let $F \colon \F_2^n\rightarrow \F_2^n$ be a function. We define its linear automorphism group by
    \[\Aut_L(F)=\{(L_1,L_2)\in \GL(n,2)^2 \colon L_1 \circ F=F\circ L_2\}.\]
\end{definition}
The following well known and simple criterion can then be used to prove inequivalence of functions, see e.g.~\cite{gologlu2025equivalences}.
\begin{proposition} \label{prop:autgroups}
    Let $F,G \colon \F_2^n \rightarrow \F_2^n$ be linearly equivalent functions via $L_1\circ F=G\circ L_2$. Then $\Aut_L(G)=(L_1,L_2)\Aut_L(F)(L_1^{-1},L_2^{-1})$.
\end{proposition}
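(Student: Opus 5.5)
The argument is a direct verification that conjugating by $(L_1,L_2)$ carries automorphisms of $F$ to automorphisms of $G$, followed by the reverse inclusion via the inverse equivalence. Since $L_1\circ F = G\circ L_2$ with $L_1,L_2\in\GL(n,2)$, we may rewrite this as $G = L_1\circ F\circ L_2^{-1}$ and $F = L_1^{-1}\circ G\circ L_2$. Throughout, $\Aut_L(F)$ is regarded as a subgroup of $\GL(n,2)\times\GL(n,2)$ with componentwise multiplication, so that $(L_1,L_2)(A,B)(L_1^{-1},L_2^{-1}) = (L_1AL_1^{-1},\,L_2BL_2^{-1})$.

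\emph{Forward inclusion.} Let $(A,B)\in\Aut_L(F)$, so $A\circ F = F\circ B$. Then
\[
(L_1AL_1^{-1})\circ G = L_1 A L_1^{-1} L_1 F L_2^{-1} = L_1 (A F) L_2^{-1} = L_1 F B L_2^{-1} = (L_1 F L_2^{-1})(L_2 B L_2^{-1}) = G\circ (L_2BL_2^{-1}).
\]
Both $L_1AL_1^{-1}$ and $L_2BL_2^{-1}$ lie in $\GL(n,2)$, so $(L_1,L_2)\Aut_L(F)(L_1^{-1},L_2^{-1})\subseteq \Aut_L(G)$.

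\emph{Reverse inclusion.} The relation is symmetric: $L_1^{-1}\circ G = F\circ L_2^{-1}$. The same computation with $(L_1^{-1},L_2^{-1})$ in place of $(L_1,L_2)$ gives $(L_1^{-1},L_2^{-1})\Aut_L(G)(L_1,L_2)\subseteq\Aut_L(F)$. Conjugating back by $(L_1,L_2)$ yields $\Aut_L(G)\subseteq (L_1,L_2)\Aut_L(F)(L_1^{-1},L_2^{-1})$, and equality follows.

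There is no real obstacle here. The only point needing care is that the sides match: $L_1$ conjugates the first (output) component and $L_2$ the second (input) component. Since all maps involved are invertible, composition is associative, and no linearity of $F$ or $G$ is used, the argument is purely formal.
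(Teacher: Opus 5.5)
Your verification is correct: the forward computation $(L_1AL_1^{-1})\circ G = G\circ(L_2BL_2^{-1})$ is right, and the reverse inclusion via the symmetric relation $L_1^{-1}\circ G = F\circ L_2^{-1}$ closes the argument without using linearity of $F$ or $G$. The paper gives no proof of its own, calling this a well-known criterion and citing \cite{gologlu2025equivalences}; your direct conjugation check is the standard argument behind it.
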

The linear automorphism groups of the Gold APN functions were determined by Kaspers and Zhou.
\begin{theorem}[{\cite[Theorem 4.1.]{kaspers2022lower}}] \label{thm:autgold}
    Let $G \colon \F_{2^n} \rightarrow \F_{2^n}$ be defined by $G(x)=x^{2^i+1}$ with $1\leq i <n$, $\gcd(i,n)=1$ and $n \geq 5$. Then $L_1 \circ G =G\circ L_2$ for $L_1,L_2 \in \GL(n,2)$ if and only if $L_1=a^{2^i+1}x^{2^k}$ and $L_2=ax^{2^k}$ for some $a \in \F_{2^n}^\times$ and $1\leq k <n$. In particular, the linear automorphism group is isomorphic to $\GammaL(1,2^n)$.
\end{theorem}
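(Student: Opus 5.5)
This statement is Theorem~\ref{thm:autgold}, quoted from Kaspers and Zhou~\cite{kaspers2022lower}; the paper only cites it and uses it. Still, here is how I would prove it. The ``if'' direction is immediate: $(ax^{2^k})^{2^i+1}=a^{2^i+1}(x^{2^i+1})^{2^k}$. So pairs $(a^{2^i+1}x^{2^k},ax^{2^k})$ lie in $\Aut_L(G)$, and they form a copy of $\GammaL(1,2^n)$.

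For the converse, I would work with the bilinear structure. Write $B(x,y)=G(x+y)+G(x)+G(y)=x^{2^i}y+xy^{2^i}$. A pair $(L_1,L_2)$ with $L_1\circ G=G\circ L_2$ satisfies $L_1(B(x,y))=B(L_2x,L_2y)$. Consequently $L_2$ maps the kernel structure of $B$ to itself, and it preserves the set of component quadratic forms up to $L_1$. Since $\gcd(i,n)=1$, $G$ is APN, so each $x\mapsto B(x,y)$ with $y\ne0$ has kernel exactly $\{0,y\}$. The first step is to pass to an extension $\F_{2^n}\otimes\overline{\F_2}$. There $B$ becomes a sum of rank-one tensors indexed by the Frobenius twists, and the decomposition of the associated ``Dickson-type'' structure into the $n$ conjugate coordinates $x^{2^j}$ is canonical.

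The second step is to show that $L_2$, written as a linearized polynomial $\sum_j c_jx^{2^j}$, must be a monomial. Substitute into $G(L_2(x))=\sum_{j,l}c_j^{2^i}c_l\,x^{2^{j+i}+2^l}$ and require the result to be a linearized image of $x^{2^i+1}$. That means only exponents of the form $2^{i+s}+2^s$ may survive. The coefficient of $x^{2^{j+i}+2^l}$ with $l\ne j$ collects $c_j^{2^i}c_l+c_{l'}^{2^i}c_{j'}$ from the coincidence of exponents. I would show that this forces $c_jc_l=0$ whenever $l\ne j$. Here the condition $n\ge5$ rules out accidental coincidences such as $2^{j+i}+2^l=2^{l'+i}+2^{j'}$ with $\{j+i,l\}\ne\{l'+i,j'\}$, and the case $i=n/2$ is excluded by the gcd condition. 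Once $L_2=ax^{2^k}$, $L_1$ is determined on the image of $G$, which spans $\F_{2^n}$, giving $L_1=a^{2^i+1}x^{2^k}$.

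I expect the main obstacle to be this coefficient bookkeeping. In particular, one has to handle the exponents where $i\equiv\pm(l-j)$ produces genuine collisions. For small $n$ (the excluded cases) these collisions do create extra automorphisms. The cleanest route is probably the known description of the Gold function's automorphisms via its associated semifield/Knuth-type invariants, but a direct exponent comparison should suffice for $n\ge5$.
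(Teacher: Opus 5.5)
This result is only cited in the paper (from Kaspers--Zhou), so there is no internal proof to compare against. Judged on its own, your outline uses the right tool: write $L_2=\sum_j c_jx^{2^j}$ and compare $G\circ L_2$ with $L_1\circ G=\sum_s e_sx^{2^{s+i}+2^s}$. However, the decisive step, that this comparison forces $L_2$ to be a monomial, is only announced ("I would show that this forces $c_jc_l=0$"), and the mechanism you point to is not the one that does the work.

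There are no accidental coincidences to exclude. For $n\ge 3$, the residues of $2^u+2^v$ ($u\ne v$) modulo $2^n-1$ are distinct for distinct unordered pairs and differ from every $2^w$. Hence the coefficient of $x^{2^u+2^v}$ in $G(L_2(x))$ is exactly $c_{u-i}^{2^i}c_v+c_{v-i}^{2^i}c_u$, and it must vanish unless $v-u\equiv\pm i \pmod n$.

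Moreover, the degenerate terms are those with $l\equiv j+i$, not $l=j$. They produce the \emph{linear} monomials $c_j^{2^i}c_{j+i}\,x^{2^{j+i+1}}$, one for each $j$. Since $L_1\circ G$ contains no linear monomials, you get $c_jc_{j+i}=0$ for all $j$. This vanishing is the key lever, and your sketch never uses it.

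With it the proof closes at once. Suppose $c_\alpha\ne0\ne c_\gamma$ with $\alpha\ne\gamma$. Then $c_{\gamma+i}=0$, so in particular $\alpha\ne\gamma+i$. The coefficient of $x^{2^\alpha+2^{\gamma+i}}$ therefore reduces to $c_\gamma^{2^i}c_\alpha\ne0$. Hence $\gamma+i-\alpha\equiv\pm i$, i.e.\ $\gamma\equiv\alpha-2i$. By symmetry $\alpha\equiv\gamma-2i$, so $4i\equiv 0\pmod n$, and $\gcd(i,n)=1$ gives $n\mid 4$.

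This is where the hypothesis on $n$ really enters, not through exponent collisions. For $n=4$, the map $L_2=c_0x+c_2x^{4}$ with $c_0/c_2$ a non-cube satisfies all the constraints. It yields genuinely non-monomial automorphisms of $x^3$ on $\F_{16}$.

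Two smaller points. First, the first step of your plan (the rank-one decomposition over $\overline{\F_2}$ and its claimed canonicity) is unjustified and can simply be dropped. Second, for even $n$ the map $G$ is not a permutation, so the claim that its image spans $\F_{2^n}$ needs a line. The image is $\{0\}$ together with the index-$3$ subgroup of cubes. Its $\F_2$-span is closed under multiplication, hence a subfield, and it has more than $2^{n/2}$ elements, hence it is all of $\F_{2^n}$.
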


We now prove that the linear automorphism groups of the functions $F$ defined in~\ref{thm_main} cannot be conjugate to the ones of Gold APN functions. Note that we already identified some automorphisms of $F$ in Eq.~\eqref{eq:aut}. We thus have: 
\begin{lemma} \label{lem:subgroup}
    Let $F$ be as in Theorem~\ref{thm_main} with associated field automorphism 
	$\sigma=2^k$. Let 
	\[
	U_k=\left\{\left(\diag(a^{2^k+1},a^{2^k+1},a^{2^k+1}),\diag(a,a,a)\right) : a \in \F_{2^m}^\times \right\}\leq \GL(3,2^m)^2.
	\]
	Then $U_k \leq \Aut_L(F)$.
\end{lemma}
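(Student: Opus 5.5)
The plan is a direct verification of the identity \eqref{eq:aut} for this particular $F$, followed by a check that the pairs in $U_k$ are elements of $\GL(3m,2)^2$ in the sense of the definition of $\Aut_L(F)$. Throughout, $F$ is viewed as a map $\F_2^{3m}\to\F_2^{3m}$ via a fixed $\F_2$-basis of $\F_{2^m}$.

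\textbf{Step 1: the pairs are linear bijections.} For $a\in\F_{2^m}^\times$, multiplication by $a$ on each coordinate is $\F_{2^m}$-linear and invertible, hence $\F_2$-linear and invertible. So $\diag(a,a,a)$ lies in $\GL(3,2^m)\le \GL(3m,2)$. Since $a^{2^k+1}\neq 0$, the same holds for $\diag(a^{2^k+1},a^{2^k+1},a^{2^k+1})$.

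\textbf{Step 2: the automorphism identity.} I will check
\[
\diag(a^{\sigma+1},a^{\sigma+1},a^{\sigma+1})\circ F=F\circ\diag(a,a,a).
\]
Every monomial occurring in $F_1,F_2,F_3$ has the form $u^{\sigma}v$ with $u,v\in\{x,y,z\}$; for instance $x^{\sigma+1}=x^\sigma x$ and $y^\sigma z$. Substituting $(ax,ay,az)$ gives $(au)^\sigma(av)=a^{\sigma+1}u^\sigma v$. The coefficients $a,b,c$ of $F$ are untouched by the substitution, so each $F_i(ax,ay,az)=a^{\sigma+1}F_i(x,y,z)$, which is exactly the displayed identity.

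\textbf{Step 3: conclusion.} With $L_1=\diag(a^{\sigma+1},a^{\sigma+1},a^{\sigma+1})$ and $L_2=\diag(a,a,a)$, Step 2 says $L_1\circ F=F\circ L_2$, so $(L_1,L_2)\in\Aut_L(F)$. It remains to see that $U_k$ is a subgroup. The map $a\mapsto(L_1,L_2)$ is a homomorphism from $\F_{2^m}^\times$, because $(ab)^{\sigma+1}=a^{\sigma+1}b^{\sigma+1}$. Hence $U_k$ is its image and therefore a subgroup, giving $U_k\le\Aut_L(F)$.

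There is no real obstacle here. The only points needing care are that each monomial is genuinely $\sigma$-semiquadratic, i.e. of bidegree $(\sigma,1)$, and that the notational clash between the coefficient $a$ of $F$ and the group parameter $a$ causes no confusion. I would rename the parameter to $\lambda$ in the written proof.
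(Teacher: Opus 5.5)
Your proof is correct and follows the paper's approach. The paper obtains the lemma directly from the homogeneity identity \eqref{eq:aut} for semiquadratic maps, $F\circ\diag(\lambda,\lambda,\lambda)=\diag(\lambda^{\sigma+1},\lambda^{\sigma+1},\lambda^{\sigma+1})\circ F$, which you verify monomial by monomial, and your extra checks (the pairs lie in $\GL(3m,2)^2$, and $U_k$ is a subgroup since $\lambda\mapsto\lambda^{\sigma+1}$ is multiplicative) are routine details the paper leaves implicit.
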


We also need some lemmas and known results:
\begin{lemma}[Zsigmondy's theorem]\cite[Chapter IX., Theorem 8.3.]{HuppertII} \label{thm:zsyg}
    Let $n>1$, $n\neq 6$ be an integer. Then there exists a prime $p$ that divides $2^n-1$ but not $2^k-1$ for all $1\leq k <n$.
\end{lemma}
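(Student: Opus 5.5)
The statement to be proved is Zsigmondy's theorem (for base $2$): for $n>1$, $n\neq 6$, some prime divides $2^n-1$ but no $2^k-1$ with $1\le k<n$. I would prove it via cyclotomic polynomials.

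\emph{Factorization.} We have $2^n-1=\prod_{e\mid n}\Phi_e(2)$. A prime $p$ is primitive for $n$ exactly when the multiplicative order of $2$ modulo $p$ equals $n$. It therefore suffices to show that $\Phi_n(2)$ has a prime divisor $p$ with $\operatorname{ord}_p(2)=n$.

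\emph{Non-primitive primes dividing $\Phi_n(2)$.} Suppose $p\mid\Phi_n(2)$ and $e=\operatorname{ord}_p(2)<n$. Then $e\mid n$. The standard lifting-the-exponent argument gives:
\begin{itemize}
\item $n=e\,p^j$ with $j\ge 1$, so $p\mid n$;
\item since also $e\mid p-1$, such a $p$ must be the largest prime divisor of $n$, so there is at most one such prime;
\item $p^2\nmid\Phi_n(2)$, except possibly when $p=2$, which cannot occur because $\Phi_n(2)$ is odd for $n>1$.
\end{itemize}
Consequently, if $\Phi_n(2)$ had no primitive prime divisor, then $\Phi_n(2)\in\{1,P\}$, where $P$ is the largest prime factor of $n$.

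\emph{Size estimate.} Next I bound $\Phi_n(2)=\prod_{\zeta}|2-\zeta|$ over primitive $n$-th roots of unity $\zeta$. Each factor satisfies $|2-\zeta|\ge 1$, with equality only when $\zeta=1$, which is excluded for $n>1$. Moreover, $|2-\zeta|>1$ strictly, and a crude bound $\Phi_n(2)\ge (2-1)^{\varphi(n)}$ can be refined to $\Phi_n(2)>P$ except for small $n$. A cleaner route is the inequality $\Phi_n(2)\ge 2^{\varphi(n)}\prod_{\zeta}|1-\zeta/2|$, combined with a direct comparison against $n$ for moderate $n$. This comparison is the main obstacle: it requires a uniform lower bound showing $\Phi_n(2)>n\ge P$. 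I would use $\Phi_n(2)\ge (2-1)\cdot\prod$ over conjugate pairs, each pair contributing $|2-\zeta|^2=5-4\cos\theta>1$, together with $\varphi(n)\ge\sqrt{n/2}$. This gives exponential growth that beats $n$ once $n$ exceeds a small bound.

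\emph{Small cases.} Finally I check the remaining small $n$ by hand:
\begin{itemize}
\item $n=2$: $\Phi_2(2)=3$, which is primitive.
\item $n=6$: $\Phi_6(2)=3$, which divides $2^2-1$. This is the genuine exception.
\item All other small $n$: the value $\Phi_n(2)$ can be read off directly and has a primitive prime factor.
\end{itemize}
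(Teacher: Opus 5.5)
The paper does not prove this lemma; it only cites Huppert--Blackburn. Your reduction is the standard cyclotomic one and is correct. You use $2^n-1=\prod_{e\mid n}\Phi_e(2)$. A non-primitive prime $p\mid\Phi_n(2)$ satisfies $n=\mathrm{ord}_p(2)\,p^j$ with $j\ge 1$, so $p$ is the largest prime $P$ dividing $n$, and since $p$ is odd it divides $\Phi_n(2)$ exactly once. Hence if there is no primitive divisor, $\Phi_n(2)\in\{1,P\}$.

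\textbf{The gap is the size estimate.} For base $2$ this estimate is the whole difficulty, and the bound you propose does not work. You have $|2-\zeta|^2=5-4\cos\theta=1+8\sin^2(\theta/2)>1$, but not uniformly: for $\theta=2\pi j/n$ with $j$ small it is $1+O(j^2/n^2)$. A product of $\varphi(n)/2$ factors that are merely $>1$ only gives $\Phi_n(2)>1$, however large $\varphi(n)$ is. So $\varphi(n)\ge\sqrt{n/2}$ buys nothing, and there is no ``exponential growth.'' Likewise, $\Phi_n(2)=2^{\varphi(n)}\prod_\zeta|1-\zeta/2|$ is an identity, not an inequality, and $|1-\zeta/2|\ge 1/2$ only returns the trivial bound. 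This is exactly why $a=2$ is the delicate case. The bound $|a-\zeta|>a-1$, which gives $\Phi_n(a)>(a-1)^{\varphi(n)}$ and real growth for $a\ge 3$, degenerates here. Also, $n=6$ with $\Phi_6(2)=3=P$ shows there is no slack to spare. Since you never establish a threshold, your ``small cases checked by hand'' are not a determined finite list either.

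\textbf{A clean fix} is the M\"obius product
\[
\Phi_n(2)=\prod_{d\mid n}(2^d-1)^{\mu(n/d)}=2^{\varphi(n)}\prod_{d\mid n}\bigl(1-2^{-d}\bigr)^{\mu(n/d)}.
\]
\begin{enumerate}
\item The factors with $\mu(n/d)=-1$ are $\ge 1$.
\item The factors with $\mu(n/d)=1$ form a sub-product of $\prod_{d\ge 1}(1-2^{-d})\approx 0.289>1/4$.
\item Hence $\Phi_n(2)>2^{\varphi(n)-2}$.
\item Since $P\mid n$ gives $\varphi(n)\ge P-1$, you get $\Phi_n(2)>2^{P-3}\ge P$ whenever $P\ge 7$.
\item For $P\in\{2,3,5\}$, the inequality $2^{\varphi(n)-2}\ge P$ fails only for $n\in\{2,3,4,5,6,10\}$. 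There $\Phi_n(2)=3,7,5,31,3,11$ respectively, and only $n=6$ gives $\Phi_n(2)=P$.
\end{enumerate}
With this estimate replacing yours, your argument becomes a complete proof.
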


\begin{lemma} \label{lem:normalizer}
    Let $S \leq \GL(3m,2)$ be a subgroup of the general linear group corresponding to the mappings $S=\{x\mapsto ax \colon a \in H\}$ where $H \leq \F_{2^m}^*$ is a subgroup of $\F_{2^m}^*$ such that $H$ is not contained in a proper subfield of $\F_{2^m}$. Then the normalizer $N_{\GL(3m,2)}(S)$ of $S$ in $\GL(3m,2)$ is exactly $N_{\GL(3m,2)}(S)=\GammaL(3,2^m)$. 
\end{lemma}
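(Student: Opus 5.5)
The plan is to identify $\F_2^{3m}$ with $\F_{2^m}^3$. For $a\in\F_{2^m}$ write $m_a\colon \U x\mapsto a\U x$, so that $S=\{m_a : a\in H\}$. The key object is the $\F_2$-subalgebra $A$ of $\End_{\F_2}(\F_2^{3m})$ generated by $S$. I will show that the normalizer of $S$ acts on $A$ by field automorphisms, and that this forces semilinearity.

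First, $A=\{m_a : a\in\F_{2^m}\}$. The map $a\mapsto m_a$ is an injective ring homomorphism $\F_{2^m}\to\End_{\F_2}(\F_2^{3m})$. Hence $A$ is the image of the $\F_2$-subalgebra $\F_2[H]\subseteq\F_{2^m}$ generated by $H$. A finite subring of a field that contains $1$ is a subfield, so $\F_2[H]$ is a subfield of $\F_{2^m}$ containing $H$. By hypothesis it is therefore all of $\F_{2^m}$, and so $A\cong\F_{2^m}$.

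Now let $g\in N_{\GL(3m,2)}(S)$. Conjugation $X\mapsto gXg^{-1}$ is an $\F_2$-algebra automorphism of $\End_{\F_2}(\F_2^{3m})$, and it maps $S$ onto $S$. It therefore maps the algebra generated by $S$ onto itself, so it restricts to a ring automorphism of $A\cong\F_{2^m}$. Such an automorphism is a field automorphism $\tau\in\Aut(\F_{2^m})$, that is, $g\,m_a\,g^{-1}=m_{\tau(a)}$ for all $a\in\F_{2^m}$. Rewriting this gives $g(a\U x)=\tau(a)\,g(\U x)$ for all $a$ and $\U x$. Together with additivity, this says exactly that $g$ is a $\tau$-semilinear bijection of $\F_{2^m}^3$, so $g\in\GammaL(3,2^m)$.

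For the converse, take $g\in\GammaL(3,2^m)$ with companion automorphism $\tau$. The same computation gives $g\,m_a\,g^{-1}=m_{\tau(a)}$, so $gSg^{-1}=\{m_{\tau(a)}:a\in H\}$. The group $\F_{2^m}^\times$ is cyclic, so $H$ is its unique subgroup of order $|H|$. Since $\tau(H)$ is a subgroup of the same order, $\tau(H)=H$, and hence $g$ normalizes $S$.

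The only point that needs care is the claim that conjugation preserves $A$ and induces a field automorphism on it. This is immediate once $A$ is recognized as the algebra generated by $S$, and this is exactly where the hypothesis that $H$ lies in no proper subfield is used. Without that hypothesis, $A$ would be a proper subfield $\F_{2^{e}}$, and the normalizer would be the larger group $\GammaL(3m/e,2^{e})$.
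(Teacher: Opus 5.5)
Your proof is correct, but it takes a different route from the paper's.

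\textbf{The paper's route.} The paper works in the model $\F_2^{3m}\cong\F_{2^{3m}}$. It writes an arbitrary $L\in\GL(3m,2)$ as a linearized polynomial $\sum_i c_i x^{2^i}$ and compares coefficients in $L\circ(x\mapsto ax)=(x\mapsto a'x)\circ L$. This gives $a'=a^{2^i}$ for every $i$ with $c_i\neq 0$. It then picks one $a\in H$ lying in no proper subfield (a generator of the cyclic group $H$). That forces $L=c_kx^{2^k}+c_{k+m}x^{2^{k+m}}+c_{k+2m}x^{2^{k+2m}}$, which is the normal form of an $\F_{2^m}$-semilinear map.

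\textbf{Your route.} You replace this coefficient comparison with a structural argument. The $\F_2$-algebra generated by $S$ is the full scalar field $\{m_a : a\in\F_{2^m}\}$. Conjugation by a normalizing element restricts to a field automorphism $\tau$ of this algebra. The resulting identity $g\, m_a\, g^{-1}=m_{\tau(a)}$ is, together with additivity, exactly $\tau$-semilinearity.

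\textbf{What each buys.} Your argument needs neither the linearized-polynomial normal form nor the choice of a single generating element. It works verbatim for $\F_{2^m}^{n}$ with any $n$, and, as you note, it identifies the normalizer as $\GammaL(3m/e,2^e)$ when $H$ only generates $\F_{2^e}$. You also prove the reverse inclusion $\GammaL(3,2^m)\le N_{\GL(3m,2)}(S)$ explicitly, which the paper's proof leaves implicit despite claiming equality. That step has an even simpler justification than your cyclicity argument: $\tau(a)=a^{2^j}$ is a power of $a$, hence lies in $H$. In return, the paper's computation gives an explicit polynomial description of the normalizer elements on $\F_{2^{3m}}$, which is the model in which the Gold function is written.
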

\begin{proof}
    Let $L \in \GL(3m,2)$. Write $L$ as a linearized polynomial, i.e., $L=\sum_{i=0}^{3m-1} c_ix^{2^i}$ for $c_i \in \F_{2^{3m}}$. We have $L \in N_{\GL(3m,2)}(S)$ if and only if $LS=SL$. So consider $A_a\in S$ defined by $x \mapsto ax$. Then 
    \[LA_a=\sum_{i=0}^{3m-1} c_i(ax)^{2^i}=\sum_{i=0}^{3m-1} c_ia^{2^i}x^{2^i}\]
    and
    \[A_{a'}L=a'\sum_{i=0}^{3m-1} c_ix^{2^i}=\sum_{i=0}^{3m-1} c_ia'x^{2^i}.\]
    So if $LA_a=A_{a'}L$ then $a'=a^{2^i}$ for all $i$ such that $c_i\neq 0$. Assume $c_k \neq 0$, so $a'=a^{2^k}$. Choose $a\in H$ in a way that it is not contained in a proper subfield of $\F_{2^m}$, so $a^{2^i}=a$ if and only if $i$ is a multiple of $m$. This implies only $c_k,c_{k+m},c_{k+2m}$ can be nonzero (where we take indices modulo $3m$). So $L=c_kx^{2^k}+c_{k+m}x^{2^{k+m}}+c_{k+2m}x^{2^{k+2m}}$. We conclude the proof by noticing that the set of invertible mappings of this form is exactly $\GammaL(3,2^m)$. 
\end{proof}

\begin{theorem}
    Let $q=2^m$, $m>2$, $m\neq 6$, $F=(F_1,F_2,F_3)\colon \F_{q}^3 \rightarrow \F_q^3$ be as in Theorem~\ref{thm_main} with associated field automorphism $\sigma=2^k$, $1\leq k <m$. Then $F$ is CCZ-inequivalent to any Gold APN function.
\end{theorem}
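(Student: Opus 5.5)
We may assume $F$ is APN, since otherwise there is nothing to prove; by Theorem~\ref{thm_main} this means $d=1$. Set $n=3m\ge 9$, so that Theorem~\ref{thm:autgold} applies. By the result of Yoshiara together with the observation in~\cite{kaspers2021number}, it suffices to show that $F$ is not linearly equivalent to any Gold function $G(x)=x^{2^i+1}$ on $\F_{2^{n}}$. Suppose, for contradiction, that $L_1\circ F=G\circ L_2$. By Proposition~\ref{prop:autgroups}, $\Aut_L(F)$ is then conjugate in $\GL(n,2)^2$ to $\Aut_L(G)$. Theorem~\ref{thm:autgold} identifies $\Aut_L(G)$ with $\GammaL(1,2^{n})$, realised as pairs $(a^{2^i+1}x^{2^j},\,ax^{2^j})$.

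By Lemma~\ref{lem:subgroup}, $U_k\le\Aut_L(F)$. Its second coordinates form a cyclic group $S=\{x\mapsto ax : a\in\F_{2^m}^\times\}$ of order $2^m-1$, where $x$ now ranges over $\F_{2^m}^3$. Conjugating by $L_2$, we obtain a subgroup $S'=L_2SL_2^{-1}$ inside the projection of $\Aut_L(G)$ to the second coordinate, which is $\GammaL(1,2^{n})=\F_{2^n}^\times\rtimes\langle\phi\rangle$, with $\phi$ the Frobenius of order $n=3m$.

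The key step is to show that $S'$ in fact lies in $\F_{2^n}^\times$ (multiplications). Take a Zsigmondy prime $r$ for $2^m-1$ via Lemma~\ref{thm:zsyg}, which is available because $m\neq 6$ and $m>1$. Then $r\mid 2^m-1$ and $r\equiv 1\pmod m$, so $r\ge m+1$. We need $r\nmid 3m=|\langle\phi\rangle|$. This holds because $r>m$ and $r\ne 3$: indeed $r=3$ would force $m=2$, which is excluded. Hence the Sylow $r$-subgroup $R'$ of $S'$ meets the quotient $\GammaL(1,2^n)/\F_{2^n}^\times$ trivially, and so $R'\le\F_{2^n}^\times$.

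The element $b$ generating $R'$ (an element of order $r$ in $\F_{2^n}^\times$) lies in $\F_{2^m}$, and by the Zsigmondy property it is not contained in a proper subfield of $\F_{2^m}$. Now compare normalizers. The normalizer in $\GL(n,2)$ of $\{x\mapsto bx\}$ with $b$ generating $\F_{2^m}$ contains $\GL(n/m,2^m)$ acting on $\F_{2^n}$ viewed as $\F_{2^m}^3$. By the argument of Lemma~\ref{lem:normalizer}, it is exactly $\GammaL(3,2^m)$. On the $F$ side, the corresponding subgroup $R=L_2^{-1}R'L_2\le S$ is scalar multiplication by elements of order $r$ on $\F_{2^m}^3$, and its normalizer is also $\GammaL(3,2^m)$ by Lemma~\ref{lem:normalizer}.

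To reach a contradiction, I would then exploit the full $\GammaL(1,2^n)$ structure. In $\Aut_L(G)$, the centralizer of $R'$ contains the whole cyclic group $\F_{2^n}^\times$ of order $2^{3m}-1$, which acts on the second coordinate. Pulling this back, $\Aut_L(F)$ would contain pairs $(M_1,M_2)$ in which $M_2$ ranges over a cyclic subgroup $C\le\GL(3,2^m)$ of order $2^{3m}-1$ (a Singer cycle), and $M_1$ is determined by $M_2$. Since $F$ is $\F_{2^m}$-semilinear in the sense that $F(\lambda x)=\lambda^{\sigma+1}F(x)$, I would show that an automorphism $(M_1,M_2)$ with $M_2\in\GL(3,2^m)$ forces $M_1\in\GL(3,2^m)$, by again using the normalizer of the first-coordinate scalars. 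The contradiction should then come from one of two facts. Either a Singer element $M_2$ of order a Zsigmondy prime $s$ of $2^{3m}-1$ acts on the explicit components $F_1,F_2,F_3$, and comparing monomials $x^{\sigma+1},y^{\sigma}z,\dots$ shows that only monomial/diagonal matrices can preserve the semiquadratic shape of $F$; or the orbit structure of $C$ on $\P^2(\F_{2^m})$, which is transitive, is incompatible with the explicit form of $F$. A diagonal or monomial subgroup has order dividing $6(2^m-1)^3$, and $s\nmid$ that number, which yields the contradiction.

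This last step, ruling out a Singer-cycle-type automorphism of $F$, is the main obstacle. I would first try the monomial comparison: write $M_2$ as a general matrix and substitute it into $F_3=z^{\sigma+1}+x^\sigma y$, using the fact that $M_1F=FM_2$ forces each component of $FM_2$ to lie in the $\F_{2^m}$-span of $F_1,F_2,F_3$. This span has a restricted set of monomials, which should force $M_2$ to be monomial.
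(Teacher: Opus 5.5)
\textbf{There is a genuine gap: the proposal never reaches a contradiction.}

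Your argument is sound up to the Sylow/normalizer step, but you stop short of what that step gives you. You show that $R'=L_2RL_2^{-1}$ consists of multiplications by the elements of $r$-power order in $\F_{2^m}^\times\subseteq\F_{2^{3m}}^\times$. Fix an $\F_{2^m}$-linear identification $\iota$ of $\F_{2^m}^3$ with $\F_{2^{3m}}$; then $\iota R\iota^{-1}=R'$. Hence $\iota^{-1}L_2$ normalizes $R$, and Lemma~\ref{lem:normalizer} gives $L_2\in\GammaL(3,2^m)$.

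Instead of using this, you pull back the whole Singer cycle $\F_{2^{3m}}^\times$ into $\Aut_L(F)$. You then propose to show that $F$ has no automorphism $(M_1,M_2)$ with $M_2$ of order a primitive prime divisor of $2^{3m}-1$. That step is exactly what is missing, and you flag it yourself as the main obstacle. The assertion that only monomial matrices can stabilize the span of $F_1,F_2,F_3$ is not proved. Proving it would mean determining the stabilizer in $\GL(3,q)$ of a $3$-dimensional subspace of the $9$-dimensional space of $\sigma$-sesquilinear forms, for all admissible $a,b,c$, and nothing you set up does that. Your claim that $M_1$ is forced into $\GL(3,2^m)$ is likewise only sketched.

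The missing idea is that you need nothing about $\Aut_L(F)$ beyond $U_k$; you should attack the equivalence $L_1\circ F=G\circ L_2$ directly.
\begin{itemize}
\item Run the same argument on the first coordinates. The first coordinates of the $r$-parts of $U_k$ and of $\Aut_L(G)$ are multiplications by $a^{2^k+1}$ and $a^{2^i+1}$, with $a\in P$. If $r\mid 2^j+1$ with $\gcd(j,m)=1$, then $m\mid 2j$, so $m\le 2$. Hence both maps $a\mapsto a^{2^j+1}$ permute $P$, $L_1$ normalizes the same scalar group, and $L_1\in\GammaL(3,2^m)$.
\item The Frobenius parts of $L_1$ and $L_2$ must agree, and they can be absorbed using $(x^{2^l},x^{2^l})\in\Aut_L(G)$. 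This leaves
\[
G(u_1x+v_1y+w_1z)=u_2F_1+v_2F_2+w_2F_3
\]
for $\F_q$-bases $\{u_1,v_1,w_1\}$ and $\{u_2,v_2,w_2\}$ of $\F_{q^3}$.
\item Expanding $(u_1x+v_1y+w_1z)^{2^i+1}$ produces the monomial $xy^{2^i}$ (reduced mod $y^q=y$) with coefficient $u_1v_1^{2^i}\neq 0$. No combination of $F_1,F_2,F_3$ contains any monomial $xy^{2^j}$ with $j\not\equiv 0\pmod m$.
\end{itemize}
This one-line monomial comparison is what the paper uses, and it replaces your Singer-cycle step entirely.
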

\begin{proof}
    We first consider a special case.
    \begin{claim}
        There are no $L_1,L_2 \in \GammaL(3,2^m)$ such that $L_1 \circ F = G \circ L_2$  for a Gold APN function $G$.
    \end{claim}
    
    \begin{proof}[Proof of claim.]
        Assume first $L_1,L_2 \in \GL(3,2^m)$.
        Such an equivalence exists if and only if there exist ordered $\F_q$-bases $\{u_1,v_1,w_1\}$ and $\{u_2,v_2,w_2\}$ of $\F_{q^3}$ such that $G(u_1x+v_1y+w_1z)=u_2F_1(x,y,z)+v_2F_2(x,y,z)+w_2F_3(x,y,z)$ since a transformation with an element in $\GL(3,2^m)$ corresponds exactly to a change of basis. Expanding $G(u_1x+v_1y+w_1z)$ for $G(x)=x^{2^i+1}$ yields a non-vanishing term of the form $u_1v_1^{2^i}xy^{2^i}$. No matter the choice of $k$, such a term does not exist in $F_1,F_2,F_3$ in Theorem~\ref{thm_main}. This proves the claim for $L_1,L_2 \in \GL(3,2^m)$.
        Now assume an equivalent of the form $L_1,L_2 \in \GammaL(3,2^m)$ exists. It is immediately clear by expanding again that the associated field automorphisms to $L_1,L_2$ have to be identical. Since all field automorphisms $(x \mapsto x^{2^l},x \mapsto x^{2^l})$ are contained in $\Aut_L(G)$ (see Theorem~\ref{thm:autgold}), we then necessarily also have an equivalence of the form $L_1,L_2 \in \GL(3,2^m)$, yielding the desired contradiction.
    \end{proof}
    Now assume $F$ is linearly equivalent to a Gold APN function $G$ and let $\sigma=2^k$ be the field automorphism associated to $F$. By Proposition~\ref{prop:autgroups}, there then exists $g=(L_1,L_2) \in \GL(3m,2)^2$ such that $g\Aut_L(F)g^{-1}=\Aut_L(G)$. Now let $p$ be a prime divisor of $2^m-1$ that does not divide $2^l-1$ for any $1 \leq l<m$. The existence of such a $p$ under our conditions is guaranteed by Theorem~\ref{thm:zsyg}, and it is immediate that $p>m$. 
Consider the subgroup $U_k\leq \Aut_L(F)$ defined in Lemma~\ref{lem:subgroup}. $U_k$ is cyclic of order $2^m-1$, so it has a unique Sylow $p$-subgroup, say $S \leq U_k$. Because $g S g^{-1} \leq \Aut_L(G)$ is a $p$-group of the same size, it must be the unique Sylow $p$-subgroup of $\Aut_L(G)$, which we will denote by $S_G$. Let $P$ be the unique Sylow $p$-subgroup of $\F_{2^m}^*$. By Lemma~\ref{lem:subgroup} and Theorem~\ref{thm:autgold}, the second components of the pairs in both $S$ and $S_G$ correspond exactly to the set of scalar multiplications $S_2 = \{x \mapsto ax \mid a \in P\}$. Because $g S g^{-1} = S_G$, $L_2$ must normalize $S_2$. That is, $L_2 S_2 L_2^{-1} = S_2$. Since $p$ does not divide $2^l-1$ for $l < m$, $P$ is not contained in a proper subfield of $\F_{2^m}$. By Lemma~\ref{lem:normalizer}, this means $L_2 \in \GammaL(3,2^m)$.	
Similarly, the first component of $S_G$ is exactly the set of scalar multiplications by $a^{2^i+1}$ for $a \in P$. Since $F$ and $G$ are APN, $\gcd(i, 3m)=1$ and $\gcd(k,m)=1$. Because the order of $2$ modulo $p$ is exactly $m$, this implies $p$ cannot divide $2^i+1$ or $2^k+1$. Thus, the maps $a \mapsto a^{2^i+1}$ and $a \mapsto a^{2^k+1}$ are bijective on $P$, and the first component of $S_G$ and $S$ are also exactly the set $S_2$. This forces $L_1 \in N_{\GL(3m,2)}(S_2) = \GammaL(3,2^m)$, using again Lemma~\ref{lem:normalizer}.
		So $L_1,L_2 \in \GammaL(3,2^m)$ which, together with the claim, yields the result. 
\end{proof}

\section{Conclusion and further notes}
We constructed a new family of APN functions, and in particular, APN permutations. This family unifies several known constructions. The proof techniques we used differed considerably from the ones used in previous papers and indicate a more general way forward by connecting the constructions of highly nonlinear functions to the constructions of certain permutations of the projective space. A natural open question is:
\begin{problem}
	Find new constructions and classifications of semi-quadratic homogeneous functions that lead to bijections of the projective space.
\end{problem}
As detailed in this paper (cf. Theorem~\ref{thm:unif}) such functions will lead to the construction of highly nonlinear functions. There are further open questions. As outlined in Remark~\ref{rem:mdeven}, if $m/d$ is even, the Walsh spectra of the APN functions we constructed in Theorem~\ref{thm_main} are completely known. In the case of the APN permutations (i.e. for odd $m$), this remains an open problem.
\begin{problem}
	Determine the Walsh spectrum of the family of APN permutations in Theorem~\ref{thm_main}.
\end{problem}

\bibliographystyle{amsplain}
\bibliography{semifields} 

\bigskip
\hrule
\bigskip
\end{document}